\g@addto@macro\bfseries{\boldmath}
\newcommand{\nE}{\nabla^E}
\newcommand{\dA}{\mathrm{dA}}
\newcommand{\Z}{\mathbb{Z}}
\newcommand{\R}{\mathbb{R}}
\newcommand{\id}{\mathrm{id}}
\newcommand{\ep}{\varepsilon}
\newcommand{\rmi}{\mathrm{i}}
\newcommand{\APS}{\mathrm{APS}}
\newcommand{\FE}{{F\hspace{-0.8mm}E}}
\newcommand{\res}{\mathrm{res}}
\newcommand{\Adach}{{\widehat{\mathrm{A}}}}
\newcommand{\ch}{\mathrm{ch}}
\newcommand{\eps}{\varepsilon}
\newcommand{\gt}{\tilde g}
\newcommand{\<}{\langle}
\renewcommand{\>}{\rangle}
\newcommand{\ind}{\mathrm{ind}}
\newcommand{\adj}{^\dagger}
\newcommand{\coker}{\mathrm{coker}}
 \newtheorem{theorem}{Theorem}[section]
 \newtheorem{lem}[theorem]{Lemma}
 \newtheorem{cor}[theorem]{Corollary}
 \newtheorem{proposition}[theorem]{Proposition}
 \theoremstyle{definition}
 \newtheorem{definition}[theorem]{Definition}
 \newtheorem{example}[theorem]{Example}
 \newtheorem{remark}[theorem]{Remark}
\title[Boundary value problems for the Lorentzian Dirac operator]{Boundary value problems for the Lorentzian Dirac operator}
\author[C.~B\"ar]{Christian B\"ar}
\author[S.~Hannes]{Sebastian Hannes}
\address{Institut f\"ur Mathematik, Universit\"at Potsdam, Karl-Liebknecht-Str.~24-25, 14476 Potsdam, Germany}
\email{baer@math.uni-potsdam.de, shannes@math.uni-potsdam.de}
\dedicatory{Dedicated to Nigel Hitchin on the occasion of his 70th birthday}
\date{\today}
\keywords{Dirac operator, globally hyperbolic Lorentzian manifold, Fredholm pair, Dirac-Fredholm pair, index theorem}
\subjclass[2010]{58J20, 58J45}
\begin{document}

\begin{abstract}
On a compact globally hyperbolic Lorentzian spin manifold with smooth spacelike Cauchy boundary the (hyperbolic) Dirac operator is known to be Fredholm when Atiyah-Patodi-Singer boundary conditions are imposed.
In this paper we investigate to what extent these boundary conditions can be replaced by more general ones and how the index then changes.
There are some differences to the classical case of the elliptic Dirac operator on a Riemannian manifold with boundary.
\end{abstract}
\maketitle


\section{Introduction}

The Atiyah-Singer index theorem \cite{AS63} for elliptic operators on closed manifolds is one of the central mathematical discoveries of the 20$^\mathrm{th}$ century.
It contains famous classical results such as the Gauss-Bonnet theorem, the Riemann-Roch theorem or Hirzebruch's signature theorem as special cases and has numerous applications in analysis, geometry, topology, and mathematical physics.
For instance, it has been used in \cite{L63} to obtain a topological obstruction to the existence of metrics with positive scalar curvature and a refinement of the index theorem was employed in \cite{H74} to show that on many manifolds a change of metric in a neighborhood of a point will alter the dimension of the space of harmonic spinors.
This contrasts with the space of harmonic forms whose dimensions are given topologically by the Betti numbers.

The index theorem for compact manifolds with boundary by Atiyah, Patodi, and Singer \cite{APS75} requires the introduction of suitable nonlocal boundary conditions which are based on the spectral decomposition of the operator induced on the boundary.
An exposition of the most general boundary conditions which one can impose in order to obtain a Fredholm operator can be found e.g.\ in \cite{BB16}.

While an analog of the Atiyah-Singer index theorem for Lorentzian manifolds is unknown and not to be expected, one for Lorentzian manifolds with spacelike boundary has been found recently \cite{BS15}.
More precisely, we consider the (twisted) Dirac operator on a spatially compact globally hyperbolic manifold.
It is supposed to have boundary consisting of two disjoint smooth spacelike Cauchy hypersurfaces.
The Dirac operator is now hyperbolic rather than elliptic but the operator induced on the boundary is still selfadjoint and elliptic so that Atiyah-Patodi-Singer boundary conditions still make sense.
It was shown in \cite{BS15} that under these boundary conditions the Dirac operator becomes Fredholm and its index is given formally by the same geometric expression as in the Riemannian case.
As an application the chiral anomaly in algebraic quantum field theory on curved spacetimes was computed in \cite{BS16}.

In the present paper we investigate more general boundary conditions which turn the hyperbolic Dirac operator into a Fredholm operator.
There are similarities and differences to the Riemannian case.
It was already observed in \cite{BS16} that the boundary conditions complementary to the APS boundary conditions, the anti-Atiyah-Patodi-Singer boundary conditions, also give rise to a Fredholm operator.
This is false in the Riemannian case.
On the other hand, we show that the conditions described in \cite{BB12,BB16} for the Riemannian case work in the Lorentzian setting only under an additional assumption.
One can think of these boundary conditions as graph deformations of the APS boundary conditions plus finite dimensional modifications.
The finite dimensional modifications work just the same in the Lorentzian setting but the deformations need to be small, either in the sense that the linear maps whose graphs we are considering are compact or that they have sufficiently small norm.
We show by example that these conditions cannot be dropped.

The paper is organized as follows.
In the next section we summarize what we need to know about Dirac operators on Lorentzian manifolds.
The most important fact is well-posedness of the Cauchy problem.
In the third section we discuss some functional-analytic topics concerning Fredholm pairs.
In the last section we combine everything and consider various examples of boundary conditions giving rise to Fredholm operators and compute their index.

\section{The Dirac operator on Lorentzian manifolds}
\label{sec:setup}

We collect a few standard facts on Dirac operators on Lorentzian manifolds.
For a more detailed introduction to Lorentzian geometry see e.g.\ \cite{BEE96,ON83}, for Dirac operators on semi-Riemannian manifolds see \cite{BGM05,B81}.

\subsection{Globally hyperbolic manifolds}
Suppose that $M$ is an $(n+1)$-dimensional oriented time-oriented Lorentzian spin manifold with $n$ odd. 
We use the convention that the metric of $M$ has signature $(-+\cdots +)$.

A subset $\Sigma\subset M$ is called a \emph{Cauchy hypersurface} if every inextensible timelike curve in $M$ meets $\Sigma$ exactly once.
If $M$ possesses a Cauchy hypersurface then $M$ is called \emph{globally hyperbolic}.
All Cauchy hypersurfaces of $M$ are homeomorphic.
We assume that $M$ is \emph{spatially compact}, i.e.\ the Cauchy hypersurfaces of $M$ are compact.

If $\Sigma_0,\Sigma_1\subset M$ are two disjoint smooth and spacelike Cauchy hypersurfaces with $\Sigma_0$ lying in the past of $\Sigma_1$ then $M$ can be written as 
\begin{equation}
M = \R \times \Sigma
\label{eq:Split}
\end{equation}
such that $\Sigma_0= \{ 0 \} \times \Sigma$, $\Sigma_1= \{ 1 \} \times \Sigma$ and each $\Sigma_t = \{ t \} \times \Sigma$ is a smooth spacelike Cauchy hypersurface.
The metric of $M$ then takes the form $\<\cdot,\cdot\> = -N^2\, dt^2 + g_t$ where $N:M\to\R$ is a smooth positive function (the lapse function) and $g_t$ is a smooth $1$-parameter family of Riemannian metrics on $\Sigma$, see \cite[Thm.~1.2]{BS06} (and also \cite[Thm.~1]{M16}).

\subsection{Spinors}
Let $SM \to M$ be the complex spinor bundle on $M$ endowed with its invariantly defined indefinite inner product $(\cdot,\cdot)$.
Denote Clifford multiplication with $X\in T_pM$ by $\gamma(X):S_pM\to S_pM$.
It satisfies
$$
\gamma(X)\gamma(Y) + \gamma(Y)\gamma(X) = - 2 (X,Y)
$$
and 
$$
(\gamma(X) u,v) = (u,\gamma(X) v)
$$
for all $X,Y\in T_pM$, $u,v \in S_pM$ and $p\in M$.

Let $e_0,e_1,\ldots,e_n$ be a positively oriented Lorentz-orthonormal tangent frame.
Then Clifford multiplication with the volume form $\Gamma=\rmi^{n(n+3)/2}\,\gamma(e_0)\cdots\gamma(e_n)$ satisfies $\Gamma^2=\id_{SM}$.
This induces the eigenspace decomposition $SM = S^RM \oplus S^LM$ for the eigenvalues $\pm1$ into right-handed and left-handed spinors.
Since the dimension of $M$ is even, $\Gamma\gamma(X)=-\gamma(X)\Gamma$ for all $X\in TM$.
In particular, $S^RM$ and $S^LM$ have equal rank and Clifford multiplication by tangent vectors reverses handedness.

Now let $\Sigma\subset M$ be a smooth spacelike hypersurface.
Denote by $\nu$ be the past-directed timelike vector field on $M$ along $\Sigma$ with $\<\nu,\nu\>\equiv -1$ which is perpendicular to $\Sigma$.
The restriction of $S^{R} M$ or $S^{L} M$ to $\Sigma$ can be naturally identified with the spinor bundle of $\Sigma$, i.e.\ $S^{R} M|_{\Sigma} = S^{L} M|_{\Sigma} = S\Sigma$.
The spinor bundle of $\Sigma$ carries a natural positive definite scalar product $\<\cdot,\cdot\>$ induced by the Riemannian metric of $\Sigma$.
The two inner products are related by $\langle \cdot, \cdot \rangle=(\gamma(\nu) \cdot, \cdot)$.

Clifford multiplication $\gamma_{\Sigma}(X)$ on $S\Sigma$ corresponds to $\rmi \gamma(\nu) \gamma(X)$ under this identification.
Note that $\gamma(\nu)^2=\id$.
Clifford multiplication on $\Sigma$ is skew-adjoint because
\begin{align*}
\<\gamma_{\Sigma}(X)u, v\> 
&= (\gamma(\nu)\rmi\gamma(\nu)\gamma(X)u, v) 
= (\rmi\gamma(X)u, v) 
= -(u,\rmi\gamma(X) v) \\
&= -(u,\gamma(\nu)\gamma_{\Sigma}(X) v) 
= -(\gamma(\nu) u,\gamma_{\Sigma}(X) v) 
= -\<u,\gamma_{\Sigma}(X) v\> \, .
\end{align*}

\subsection{The Dirac operator}
Let $E\to M$ be a Hermitian vector bundle with a compatible connection $\nE$.
Sections of the vector bundles $V^R:=S^RM\otimes E$ and $V^L:=S^LM\otimes E$ are called right-handed (resp.\ left-handed) twisted spinors (or spinors with coefficients in $E$).
The inner product $(\cdot,\cdot)$ on $S^RM$ and the scalar product on $E$ induce an (indefinite) inner product on $V^R$, again denoted by $(\cdot,\cdot)$.
When restricted to a spacelike hypersurface the scalar product $\<\cdot,\cdot\>$ on $S^RM$ and the one on $E$ induce a (positive definite) scalar product on $V^R$, again denoted by $\<\cdot,\cdot\>$.

Let $D : C^\infty(M;V^R) \to C^\infty(M;V^L)$ be the Dirac operator acting on right-handed twisted spinors. 
In terms of a local Lorentz-orthonormal tangent frame $e_0,e_1,\ldots,e_n$ this operator is given by
$$
D= \sum_{j=0}^n \ep_j\gamma(e_j)\nabla_{e_j}
$$
where $\ep_j=\<e_j,e_j\>=\pm 1$ and $\nabla$ is the connection on $V^R$ induced by the Levi-Civita connection on $S^RM$ and $\nE$.
The Clifford multiplication $\gamma(X)$ of a tangent vector $X\in T_pM$ on a twisted spinor $\phi\otimes e\in V^R_p=S^R_pM\otimes E_p$ is to be understood as acting on the first factor, $\gamma(X)(\phi\otimes e)=(\gamma(X)\phi)\otimes e$.
The Dirac operator is a hyperbolic linear differential operator of first order.

Along a smooth spacelike hypersurface $\Sigma\subset M$ with past-directed unit normal field $\nu$ the Dirac operator can be written as
\begin{equation}
D = -\gamma(\nu) \left(\nabla_{\nu} + \rmi\, A_\Sigma - \frac{n}{2}H \right)
\label{eq:DiracSurface+}
\end{equation}
where $H$ is the mean curvature of $\Sigma$ with respect to $\nu$ and $A_\Sigma$ is the elliptic twisted Dirac operator of the Riemannian manifold $\Sigma$.

\subsection{The Cauchy problem}
Now we fix two smooth spacelike Cauchy hypersurfaces $\Sigma_1$ and $\Sigma_0$ in $M$.
We assume that $\Sigma_0$ lies in the chronological past of $\Sigma_1$.
Then we consider the region $M_0$ ``in between'' $\Sigma_0$ and $\Sigma_1$, more precisely, $M_0=J^+(\Sigma_0)\cap J^-(\Sigma_1)$ where $J^-$ and $J^+$ denote the causal past and future, respectively.
Since $M$ is spatially compact, the region $M_0$ is a compact manifold with boundary, the boundary being the disjoint union of $\Sigma_0$ and $\Sigma_1$.

For any compact spacelike hypersurface $\Sigma\subset M$ we define the $L^2$-scalar product for $u,v\in C^\infty(\Sigma,V^R)$ by 
$$
(u,v)_{L^2} = \int_\Sigma \<u,v\>\, \dA
$$
where $\dA$ denotes the volume element of $\Sigma$ induced by its Riemannian metric.
Recall that the inner product $\<\cdot,\cdot\>$ on $V^R$ is positive definite.
The completion of $C^\infty(\Sigma,V^R)$ w.r.t.\ the $L^2$-norm will be denoted by $L^2(\Sigma,V^R)$.

Similarly, using an auxiliary positive definite scalar product on $V^L$ we can define $L^2(M_0,V^L)$ where we integrate against the volume element induced by the Lorentzian metric on $M$.
By compactness of $M_0$ different choices of auxiliary scalar products on $V^L$ will give rise to equivalent $L^2$-norms.
Hence $L^2(M_0,V^L)$ is unanimously defined as a topological vector space.

Finally, we complete $C^\infty(M_0,V^R)$ w.r.t.\ the $L^2$-graph-norm of $D$
$$
\|u\|_\FE^2 = \|u\|_{L^2}^2 + \|Du\|_{L^2}^2
$$
and obtain the ``finite-energy'' space $\FE(M_0,V^R)$.

Now the Dirac operator obviously extends to a bounded operator $D:\FE(M_0,V^R)\to L^2(M_0,V^L)$.
It can be checked that the restriction map $C^\infty(M_0,V^R)\to C^\infty(\Sigma,V^R)$ extends uniquely to a bounded operator $\res_\Sigma:\FE(M_0,V^R)\to L^2(\Sigma,V^R)$ if $\Sigma$ is a spacelike Cauchy hypersurface. 

In these function spaces the Cauchy problem is well posed:

\begin{theorem}\label{thm:InhomoCauchyProblem}
Let $\Sigma\subset M_0$ be a smooth spacelike Cauchy hypersurface.
Then the mapping
\begin{align*}
\res_\Sigma \oplus D : \FE(M_0,V^R) &\to L^2(\Sigma;V^R)\oplus L^2(M_0,V^L)
\end{align*}
is an isomorphism of Hilbert spaces.\qed
\end{theorem}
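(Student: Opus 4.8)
The plan is to reduce the statement to the classical well-posedness of symmetric hyperbolic systems, combined with a global energy estimate, and then to pass to the $L^2$/finite-energy completions by density. Injectivity and boundedness of the inverse will come straight from the energy estimate, while surjectivity will follow from smooth solvability together with the fact that a bounded-below operator has closed range.

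First I would treat smooth data. Using the splitting $M=\R\times\Sigma$ from \eqref{eq:Split} and formula \eqref{eq:DiracSurface+}, and multiplying by $\gamma(\nu)$ (recall $\gamma(\nu)^2=\id$), the equation $Du=f$ becomes an evolution equation of the form $\nabla_\nu u = -\rmi A_{\Sigma_t}u + B_t u + \gamma(\nu)f$, where $B_t$ is a zeroth-order term depending smoothly on $t$ (it collects the mean curvature and the lapse). Since each $A_{\Sigma_t}$ is a formally self-adjoint elliptic first-order operator, this is a symmetric hyperbolic system on the compact slab $M_0$, and the Cauchy problem with initial value $\res_\Sigma u = u_0\in C^\infty(\Sigma,V^R)$ and right-hand side $f\in C^\infty(M_0,V^L)$ has a unique smooth solution $u\in C^\infty(M_0,V^R)$ by the standard theory (see e.g.\ \cite{BGM05,BS15}); compactness of $M_0$ keeps everything on a finite time interval so no issue with global existence arises.

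The technical heart is the energy estimate. For a smooth section $u$ put $e(t)=\|\res_{\Sigma_t}u\|_{L^2(\Sigma_t)}^2$. Differentiating in $t$, using that $\rmi A_{\Sigma_t}$ is skew-adjoint, that the remaining contributions to $\nabla_\nu u$ are of order zero, and that the $t$-derivatives of $g_t$, of $N$, and of the induced volume element are bounded on the compact set $M_0$, one obtains $|e'(t)|\le C\bigl(e(t)+\|f\|_{L^2(\Sigma_t)}^2\bigr)$ with $C$ independent of $u$. Gronwall's lemma yields $\sup_t e(t)\le C'\bigl(\|\res_\Sigma u\|_{L^2}^2+\|Du\|_{L^2}^2\bigr)$, and integrating once more in $t$ gives
\[
\|u\|_\FE^2 = \|u\|_{L^2}^2 + \|Du\|_{L^2}^2 \le C''\bigl(\|\res_\Sigma u\|_{L^2}^2 + \|Du\|_{L^2}^2\bigr)
\]
for all $u\in C^\infty(M_0,V^R)$. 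Since $C^\infty(M_0,V^R)$ is by definition dense in $\FE(M_0,V^R)$, this persists for all $u\in\FE(M_0,V^R)$, so $\res_\Sigma\oplus D$ is bounded below; in particular it is injective and has closed range, and its inverse on the range is bounded.

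Finally I would assemble the pieces: $\res_\Sigma\oplus D$ is bounded by construction of $\FE(M_0,V^R)$ and $\res_\Sigma$; it is injective with closed range by the estimate above; and its range contains every pair $(u_0,f)$ with $u_0,f$ smooth, because the smooth solution from the first step lies in $\FE(M_0,V^R)$ and is mapped to $(u_0,f)$. Such pairs are dense in $L^2(\Sigma,V^R)\oplus L^2(M_0,V^L)$, and a closed dense subspace is the whole space, so the map is onto. A bounded bijection of Hilbert spaces whose inverse is bounded is an isomorphism, and we even avoid invoking the open mapping theorem. I expect the main obstacle to be the energy estimate — specifically, bookkeeping all the zeroth-order terms generated by the $t$-dependence of $g_t$, $N$, $H$ and of the identification $S^RM|_{\Sigma_t}=S\Sigma_t$, checking (via compactness of $M_0$) that they are genuinely bounded, and making sure they do not destroy the skew-adjointness of the principal evolution term $\rmi A_{\Sigma_t}$; a secondary point is verifying that the smooth Cauchy solution really has finite energy, which the same estimate applied to that solution settles.
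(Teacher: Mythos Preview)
The paper does not actually prove this theorem: the statement carries a \qed\ with no argument, and immediately after Corollary~\ref{cor:HomoCauchyProblem} the reader is referred to \cite{BS15} for details. So there is no in-paper proof to compare against; the result is imported wholesale.

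That said, your outline is the standard route and is essentially what \cite{BS15} does: rewrite $D$ along the foliation via \eqref{eq:DiracSurface+} as a first-order evolution equation with skew-adjoint principal part $\rmi A_{\Sigma_t}$, derive an $L^2$ energy estimate by differentiating $\|\res_{\Sigma_t}u\|^2$ and applying Gronwall on the compact slab, and then use density of smooth sections to pass to $\FE$. Your identification of the delicate point is also accurate: the bookkeeping of the zeroth-order terms coming from the $t$-dependence of $g_t$, $N$, $H$, and the spinor identifications is where the work lies, and compactness of $M_0$ is exactly what makes all those coefficients uniformly bounded. One small addition worth making explicit is that the restriction map $\res_\Sigma$ is bounded $\FE\to L^2$ precisely because of the same energy estimate (this is stated just before the theorem but not proved there either), so your argument in fact establishes that as a byproduct.
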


In particular, we get well-posedness of the Cauchy problem for the homogeneous Dirac equation:

\begin{cor}\label{cor:HomoCauchyProblem}
For any smooth spacelike Cauchy hypersurface $\Sigma\subset M_0$ the restriction mapping
$$
\res_\Sigma : \{u\in\FE(M_0,V^R)\mid Du=0\} \to L^2(\Sigma,V^R)
$$
is an isomorphism of Hilbert spaces.\qed
\end{cor}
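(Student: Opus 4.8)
The plan is to deduce this immediately from Theorem~\ref{thm:InhomoCauchyProblem}. Write $\H:=\{u\in\FE(M_0,V^R)\mid Du=0\}$ for the space of finite-energy solutions of the homogeneous Dirac equation. Since $D:\FE(M_0,V^R)\to L^2(M_0,V^L)$ is bounded, $\H=\ker D$ is a closed subspace of $\FE(M_0,V^R)$, hence itself a Hilbert space, so the assertion makes sense. Now observe that under the identification of $L^2(\Sigma;V^R)$ with the closed subspace $L^2(\Sigma;V^R)\oplus\{0\}$ of $L^2(\Sigma;V^R)\oplus L^2(M_0,V^L)$, the isomorphism $\Phi:=\res_\Sigma\oplus D$ of Theorem~\ref{thm:InhomoCauchyProblem} restricts on $\H$ to exactly $\res_\Sigma|_\H$, because for $u\in\H$ the second component $Du$ vanishes; equivalently, $\H=\Phi^{-1}\bigl(L^2(\Sigma;V^R)\oplus\{0\}\bigr)$.

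Given this, I would argue as follows. Injectivity: if $u\in\H$ with $\res_\Sigma u=0$, then $\Phi(u)=(0,0)$, so $u=0$ since $\Phi$ is injective. Surjectivity: for $f\in L^2(\Sigma;V^R)$ put $u:=\Phi^{-1}(f,0)\in\FE(M_0,V^R)$; then $\res_\Sigma u=f$ and $Du=0$, so $u\in\H$ and $f$ lies in the image of $\res_\Sigma|_\H$. Hence $\res_\Sigma|_\H:\H\to L^2(\Sigma;V^R)$ is a continuous bijection whose inverse is $f\mapsto\Phi^{-1}(f,0)$, the composition of the isometric inclusion $f\mapsto(f,0)$ with the bounded operator $\Phi^{-1}$, and therefore bounded. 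Thus $\res_\Sigma|_\H$ is an isomorphism of Hilbert spaces.

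There is essentially no analytic obstacle here, as the entire content is already carried by Theorem~\ref{thm:InhomoCauchyProblem}; the proof is pure functional-analytic bookkeeping. The only point deserving a moment's care is the identification of $\H$ with the $\Phi$-preimage of $L^2(\Sigma;V^R)\oplus\{0\}$ and the accompanying remark that $\Phi$ acts on that subspace simply by $\res_\Sigma$. Once this is in place, injectivity, surjectivity, and boundedness of the inverse follow directly from the corresponding properties of $\Phi$ and $\Phi^{-1}$, and the open mapping theorem is not even needed.
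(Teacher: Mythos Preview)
Your argument is correct and is precisely the immediate deduction the paper intends: the corollary is stated with a \qed and no separate proof, as it follows directly from Theorem~\ref{thm:InhomoCauchyProblem} by restricting the isomorphism $\res_\Sigma\oplus D$ to the preimage of $L^2(\Sigma;V^R)\oplus\{0\}$. Your write-up makes this explicit and there is nothing to add.
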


For details see \cite{BS15}.

\subsection{The wave evolution}
Applying Corollary~\ref{cor:HomoCauchyProblem} to $\Sigma=\Sigma_0$ and to $\Sigma=\Sigma_1$ we can define the \emph{wave evolution operator} 
\begin{equation}
Q:L^2(\Sigma_0,V^R) \to L^2(\Sigma_1,V^R)
\label{eq:Q}
\end{equation}
by the commutative diagram
$$
\xymatrix{
&  \{u\in\FE(M_0,V^R)\mid Du=0\} \ar[dr]^{\res_{\Sigma_1}}_\cong \ar[dl]_{\res_{\Sigma_0}}^\cong &  \\
L^2(\Sigma_0,V^R) \ar[rr]^{Q} & & L^2(\Sigma_1,V^R)
}
$$

By construction, $Q$ is an isomorphism.
One can check that $Q$ is actually unitary, that it restricts to isomorphisms $H^s(\Sigma_0,V^R)\to H^s(\Sigma_1,V^R)$ for all $s>0$ and that it extends to  isomorphisms $H^s(\Sigma_0,V^R)\to H^s(\Sigma_1,V^R)$ for all $s<0$.
Here $H^s$ denote the corresponding Sobolev spaces. 
As a consequence, $Q$ maps $C^\infty(\Sigma_0,V^R)$ to $C^\infty(\Sigma_1,V^R)$.

In fact, well-posedness of the Dirac equation also holds for smooth sections, i.e.,
\begin{equation}
\res_\Sigma \oplus D : C^\infty(M_0,V^R) \to C^\infty(\Sigma;V^R)\oplus C^\infty(M_0,V^L)
\label{eq:GlattWohlgestellt}
\end{equation}
is an isomorphism of Fr\'echet spaces.

\section{Fredholm pairs}
\label{sec:Fredholm}

In this section we collect a few functional-analytic facts which will be useful later.

\begin{definition}
Let $H$ be a Hilbert space and let $B_0,B_1\subset H$ be closed linear subspaces.
Then $(B_0,B_1)$ is called a \emph{Fredholm pair} if $B_0\cap B_1$ is finite dimensional and $B_0+B_1$ is closed and has finite codimension.
The number
$$
\ind (B_0,B_1)=\dim (B_0\cap B_1)-\dim (H/ (B_0+B_1))
$$
is called the \emph{index} of the pair $(B_0,B_1)$.
\end{definition}

We list a few elementary properties of Fredholm pairs.
For details see \cite[Ch.~IV, \S~4]{K95}.

\begin{remark}\label{rem:Fredpairs}
\begin{enumerate}[1.)]
\item\label{Fredpair1}
The pair $(B_0,B_1)$ is Fredholm if and only if $(B_1,B_0)$ is a Fredholm pair and in this case 
$$
\ind(B_0,B_1)=\ind(B_1,B_0).
$$
\item\label{Fredpair2}
The pair $(B_0,B_1)$ is Fredholm if and only if $(B_0^\perp,B_1^\perp)$ is a Fredholm pair and in this case 
$$
\ind(B_0,B_1)=-\ind(B_0^\perp,B_1^\perp).
$$
\item\label{Fredpair3}
Let $B_0'\subset H$ be a closed linear subspace with $B_0\subset B_0'$ and $\dim(B_0'/B_0)<\infty$.
Then $(B_0,B_1)$ is a Fredholm pair if and only if $(B_0',B_1)$ is a Fredholm pair and in this case
$$
\ind (B_0',B_1) = \ind (B_0,B_1) +\dim(B_0'/B_0).
$$
\end{enumerate}
\end{remark}

The following lemma reformulates the concept of Fredholm pairs in terms of orthogonal projections. 
For a proof see e.g.\ \cite[Lemma~24.3]{BW93}.
Here and henceforth, the orthogonal projection onto a closed subspace $V$ of a Hilbert space $H$ will be denoted by 
$$
\pi_V:H\to V.
$$

\begin{lem}\label{projections}
Let $B_0,B_1\subset H$ be closed linear subspaces. 
Then $(B_0,B_1)$ is a Fredholm pair of index $k$ if and only if
$$
\pi_{B_1^\perp}|_{B_0}:B_0\rightarrow B_1^\perp
$$
is a Fredholm operator of index $k$. 
In this case we have $\ker \big(\pi_{B_1^\perp}|_{B_0}\big)=B_0\cap B_1$ and $\coker \big(\pi_{B_1^\perp}|_{B_0}\big)\cong B_0^\perp\cap B_1^\perp$.\qed
\end{lem}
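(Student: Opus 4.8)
The plan is to study the bounded operator $T:=\pi_{B_1^\perp}|_{B_0}\colon B_0\to B_1^\perp$ directly, matching its three Fredholm-defining properties---finite-dimensional kernel, closed range, finite-dimensional cokernel---against the three conditions in the definition of a Fredholm pair. For the kernel: since $B_1$ is closed, $(B_1^\perp)^\perp=B_1$, so for $u\in B_0$ we have $Tu=\pi_{B_1^\perp}u=0$ if and only if $u\perp B_1^\perp$, i.e.\ $u\in B_1$. Hence $\ker T=B_0\cap B_1$, which is finite-dimensional exactly when the first condition on $(B_0,B_1)$ holds.

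Next I would determine the orthogonal complement of $\operatorname{ran}T$ inside the Hilbert space $B_1^\perp$. For $v\in B_1^\perp$ one has $\langle v,\pi_{B_1^\perp}u\rangle=\langle\pi_{B_1^\perp}v,u\rangle=\langle v,u\rangle$ for all $u\in B_0$, so $v$ is orthogonal to $\operatorname{ran}T$ precisely when $v\in B_0^\perp$. Thus the orthogonal complement of $\operatorname{ran}T$ in $B_1^\perp$ is $B_0^\perp\cap B_1^\perp=(B_0+B_1)^\perp$. In particular, once $\operatorname{ran}T$ is known to be closed, $\coker T\cong B_0^\perp\cap B_1^\perp$ and $\dim\coker T=\dim(B_0+B_1)^\perp=\operatorname{codim}(B_0+B_1)$.

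The step needing the most care is the equivalence ``$\operatorname{ran}T$ closed $\iff$ $B_0+B_1$ closed''. Here the relevant observations are that $\pi_{B_1^\perp}(B_0+B_1)=\pi_{B_1^\perp}(B_0)=\operatorname{ran}T$ (since $\pi_{B_1^\perp}$ annihilates $B_1$) and that the natural map $H/B_1\to B_1^\perp$, $u+B_1\mapsto\pi_{B_1^\perp}u$, is an isometric isomorphism of Hilbert spaces. Under the correspondence between closed subspaces of $H/B_1$ and closed subspaces of $H$ containing $B_1$, the space $B_0+B_1$ corresponds to $\operatorname{ran}T$; hence $B_0+B_1$ is closed in $H$ if and only if $\operatorname{ran}T$ is closed in $B_1^\perp$. (Alternatively, one could invoke the standard fact that a bounded operator with finite-dimensional kernel and finite-codimensional range automatically has closed range, but I prefer to keep the argument self-contained.)

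Putting the three equivalences together would then show that $T$ is Fredholm if and only if $B_0\cap B_1$ is finite-dimensional, $B_0+B_1$ is closed, and $\operatorname{codim}(B_0+B_1)<\infty$, i.e.\ if and only if $(B_0,B_1)$ is a Fredholm pair. In that case the descriptions of kernel and cokernel obtained above give $\ker T=B_0\cap B_1$, $\coker T\cong B_0^\perp\cap B_1^\perp$, and
\begin{align*}
\ind T &=\dim\ker T-\dim\coker T =\dim(B_0\cap B_1)-\dim(B_0^\perp\cap B_1^\perp)\\
&=\dim(B_0\cap B_1)-\operatorname{codim}(B_0+B_1) =\ind(B_0,B_1),
\end{align*}
as asserted.
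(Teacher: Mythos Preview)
Your argument is correct: the identifications $\ker T=B_0\cap B_1$ and $(\operatorname{ran}T)^\perp=B_0^\perp\cap B_1^\perp$ are straightforward, and your handling of the closed-range equivalence via the isometric isomorphism $H/B_1\cong B_1^\perp$ is clean and complete. The paper does not actually prove this lemma; it simply cites \cite[Lemma~24.3]{BW93} and marks the statement with \qed, so your self-contained proof in fact supplies more than the paper does.
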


Let $E$, $F$, $H_0$, and $H_1$ be Hilbert spaces and let $L:E\to F$, $r_0:E\to H_0$, and $r_1:E\to H_1$ be bounded linear maps.
We assume that 
$$
r_j\oplus L : E\to H_j \oplus F
$$
is an isomorphism for $j=0$ and for $j=1$.
Then $r_j$ restricts to an isomorphism $\ker(L)\to H_j$.
We define the isomorphism $Q:H_0\to H_1$ by the commutative diagram
$$
\xymatrix{
&  \ker(L) \ar[dr]^{r_1}_\cong \ar[dl]_{r_0}^\cong &  \\
H_0 \ar[rr]^{Q} & & H_1
}
$$

\begin{proposition}
\label{prop:FredholmAbstrakt}
Assume that $r_0\oplus r_1:E\to H_0\oplus H_1$ is onto.
Let $B_j\subset H_j$ be closed linear subspaces.
Then the following are equivalent:
\begin{enumerate}[(i)]
\item\label{Fred0}
The pair $(B_0,Q^{-1}B_1)$ is Fredholm of index $k$;
\item\label{Fred1}
The pair $(QB_0,B_1)$ is Fredholm of index $k$;
\item\label{Fred2}
The operator $(\pi_{B_0^\perp}\circ r_0)\oplus (\pi_{B_1^\perp}\circ r_1)\oplus L : E \to B_0^\perp \oplus B_1^\perp \oplus F$ is Fredholm of index~$k$;
\item\label{Fred3}
The restriction $L:\ker(\pi_{B_0^\perp}\circ r_0)\cap \ker(\pi_{B_1^\perp}\circ r_1)\to F$ is a Fredholm operator of index~$k$.
\end{enumerate}
\end{proposition}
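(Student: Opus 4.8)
The plan is to prove the proposition by establishing the cycle of equivalences (i) $\Leftrightarrow$ (ii), (iii) $\Leftrightarrow$ (iv) and (ii) $\Leftrightarrow$ (iv). The first of these is purely formal: since $r_j\oplus L$ is an isomorphism for $j=0,1$, the map $Q$ is a bounded bijection of Hilbert spaces and hence, by the open mapping theorem, a topological isomorphism. Therefore $QB_0$ and $Q^{-1}B_1$ are closed, $Q$ carries $B_0\cap Q^{-1}B_1$ isomorphically onto $QB_0\cap B_1$ and $B_0+Q^{-1}B_1$ onto $QB_0+B_1$ (preserving closedness), and it induces an isomorphism $H_0/(B_0+Q^{-1}B_1)\to H_1/(QB_0+B_1)$. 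Hence $(B_0,Q^{-1}B_1)$ is a Fredholm pair of index $k$ if and only if $(QB_0,B_1)$ is.

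For (iii) $\Leftrightarrow$ (iv) I would first note that the operator $S:=(\pi_{B_0^\perp}\circ r_0)\oplus(\pi_{B_1^\perp}\circ r_1):E\to B_0^\perp\oplus B_1^\perp$ --- whose composition with $L$ is the operator in (iii) and whose kernel is the domain in (iv) --- is \emph{surjective}, because $\pi_{B_0^\perp}\oplus\pi_{B_1^\perp}$ is surjective and, by hypothesis, so is $r_0\oplus r_1$. For any surjective bounded operator $S:E\to G$ and any bounded $L:E\to F$ one has: $S\oplus L:E\to G\oplus F$ is Fredholm of index $k$ if and only if $L|_{\ker S}:\ker S\to F$ is. Indeed, decomposing $E=\ker S\oplus(\ker S)^{\perp}$ and using that $S$ restricts to an isomorphism $(\ker S)^{\perp}\to G$, one conjugates $S\oplus L$, by isomorphisms of the domain and of the target, to $\mathrm{id}_G\oplus(L|_{\ker S})$, which has the same kernel and cokernel as $L|_{\ker S}$. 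Applying this with $G=B_0^\perp\oplus B_1^\perp$ yields the equivalence.

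The heart of the argument is (ii) $\Leftrightarrow$ (iv), where the well-posedness built into the hypotheses is essential. Note that $\ker S=\{x\in E: r_0x\in B_0,\ r_1x\in B_1\}$. Since $r_0\oplus L$ is an isomorphism, every $x\in E$ decomposes uniquely as $x=\iota_0(r_0x)+G_0(Lx)$ with $\iota_0:=(r_0|_{\ker L})^{-1}:H_0\to\ker L$ and $G_0:=(r_0\oplus L)^{-1}|_{\{0\}\oplus F}:F\to E$. Setting $P_0:=r_1\circ G_0:F\to H_1$ one obtains the identity $r_1=Q\circ r_0+P_0\circ L$ on $E$, and a short check shows that the assumption ``$r_0\oplus r_1$ surjective'' is \emph{equivalent} to ``$P_0$ surjective''. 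From this, $\ker(L|_{\ker S})\cong\{h_0\in B_0:Qh_0\in B_1\}=B_0\cap Q^{-1}B_1\cong QB_0\cap B_1$, while $L(\ker S)=P_0^{-1}(QB_0+B_1)$; since $P_0$ is a surjection, hence open, $L(\ker S)$ is closed if and only if $QB_0+B_1$ is, and $P_0$ descends to an isomorphism $F/L(\ker S)\to H_1/(QB_0+B_1)$. Therefore $L|_{\ker S}$ is Fredholm of index $k$ precisely when $(QB_0,B_1)$ is a Fredholm pair of index $k$, which closes the cycle.

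The genuinely non-formal input is the surjectivity invoked in the last two steps --- of $S$ in (iii) $\Leftrightarrow$ (iv) and of $P_0$ in (ii) $\Leftrightarrow$ (iv) --- which is exactly where the hypothesis that $r_0\oplus r_1$ be onto is used, together with the elementary fact that for an open surjection $P_0$ a subspace $V\subseteq H_1$ is closed if and only if $P_0^{-1}(V)$ is. Everything else reduces to bookkeeping with the isomorphisms $r_j\oplus L$ and the identity $r_1=Q\circ r_0+P_0\circ L$; I expect the only real risk to be losing track of these identifications rather than any conceptual obstacle.
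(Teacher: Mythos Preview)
Your argument is correct. The overall structure --- linking (i)$\Leftrightarrow$(ii) via the isomorphism $Q$, and (iii)$\Leftrightarrow$(iv) via the general principle that for a bounded surjection $S$ the operator $S\oplus L$ is Fredholm iff $L|_{\ker S}$ is --- matches the paper; the paper outsources that principle to \cite[Prop.~A.1.(iv)]{BB12}, while you reprove it in one line by conjugating to $\id_G\oplus L|_{\ker S}$.

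The genuine difference is in how you close the cycle. The paper proves (ii)$\Leftrightarrow$(iii): it applies the same abstract principle with $P=L$ to reduce (iii) to the Fredholmness of $(\pi_{B_0^\perp}r_0)\oplus(\pi_{B_1^\perp}r_1)|_{\ker L}$, then writes this map in block-matrix form with respect to $H_0=B_0^\perp\oplus B_0$ and uses that a lower-triangular matrix with invertible diagonal corner is Fredholm iff the other corner $\pi_{B_1^\perp}Q|_{B_0}$ is; Lemma~\ref{projections} then identifies this with the Fredholm-pair condition (ii). You instead prove (ii)$\Leftrightarrow$(iv) directly: introducing the ``solution operator'' $G_0$ and $P_0=r_1\circ G_0$, you obtain the identity $r_1=Q\circ r_0+P_0\circ L$, observe that surjectivity of $r_0\oplus r_1$ is equivalent to surjectivity of $P_0$, and read off $\ker(L|_{\ker S})\cong QB_0\cap B_1$ and $\coker(L|_{\ker S})\cong H_1/(QB_0+B_1)$ explicitly. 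Your route is self-contained (no external reference, no appeal to Lemma~\ref{projections}) and makes transparent exactly where and how the surjectivity hypothesis enters; the paper's route is shorter on the page because it leans on \cite{BB12} and the standard matrix trick, and it highlights the projection $\pi_{B_1^\perp}Q|_{B_0}$ as the essential Fredholm object, which reappears in later computations.
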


\begin{proof}
a)
The equivalence of \eqref{Fred0} and \eqref{Fred1} is clear because $Q:H_0\to H_1$ is an isomorphism.

b)
Proposition~A.1.(iv) in \cite{BB12} with $P=L$ states that \eqref{Fred2} is equivalent to the operator
$$
((\pi_{B_0^\perp}\circ r_0)\oplus (\pi_{B_1^\perp}\circ r_1))\vert_{\ker L} : \ker L\to B_0^\perp \oplus B_1^\perp
$$
being Fredholm of index $k$. Since $r_1\vert_{\ker L}=Q\circ r_0\vert_{\ker L}$ this is again equivalent to 
\begin{align*}
\widetilde{L}:H_0\to B_0^\perp\oplus B_1^\perp,
\quad
h_0\mapsto (\pi_{B_0^\perp}(h_0),\pi_{B_1^\perp}(Q(h_0)),
\end{align*}
being Fredholm of index $k$. 
With respect to the splittings $\widetilde{L}:H_0=B_0^\perp\oplus B_0\to B_0^\perp\oplus B_1^\perp$ takes the operator matrix form
$$
\widetilde{L} =
\begin{pmatrix}
1_{B_0^\perp} & 0 \\
\pi_{B_1^\perp}Q|_{B_0^\perp} & \pi_{B_1^\perp}Q|_{B_0}
\end{pmatrix}.
$$
Now $\widetilde{L}$ is Fredholm if and only if it is invertible modulo compact operators.
This is equivalent to $\pi_{B_1^\perp}Q|_{B_0}$ being invertible modulo compact operators, i.e.\ being Fredholm.
By deformation invariance of the index we have
$$
\ind\big(\widetilde{L}\big)
=
\ind
\begin{pmatrix}
1_{B_0^\perp} & 0 \\
\pi_{B_1^\perp}Q|_{B_0^\perp} & \pi_{B_1^\perp}Q|_{B_0}
\end{pmatrix}
=
\ind
\begin{pmatrix}
1_{B_0^\perp} & 0 \\
0 & \pi_{B_1^\perp}Q|_{B_0}
\end{pmatrix}
=
\ind \big(\pi_{B_1^\perp}Q|_{B_0}\big).
$$
This shows that \eqref{Fred2} is equivalent to $\pi_{B_1^\perp}\vert_{QB_0}$ being a Fredholm operator of index~$k$.
The equivalence of \eqref{Fred1} and \eqref{Fred2} now follows with Lemma~\ref{projections}.

c)
The equivalence of \eqref{Fred2} and \eqref{Fred3} is Proposition~A.1.(iv) in \cite{BB12} with 
$$
P=(\pi_{B_0^\perp}\circ r_0)\oplus (\pi_{B_1^\perp}\circ r_1):E\to B_0^\perp \oplus B_1^\perp .
$$
Note that $P$ is onto because $r_0\oplus r_1:E\to H_0\oplus H_1$ is onto by assumption.
\end{proof}

\section{Boundary value problems for the Dirac operator}
\label{sec:BVP}

We return to our twisted Dirac operator $D$ on a Lorentzian manifold as introduced in Section~\ref{sec:setup}.
Let $B_0\subset L^2(\Sigma_0,V^R)$ and $B_1\subset L^2(\Sigma_1,V^R)$ be closed subspaces.
Denote by $Q:L^2(\Sigma_0,V^R) \to L^2(\Sigma_1,V^R)$ the wave evolution operator as defined in \eqref{eq:Q}.
Combining Theorem~\ref{thm:InhomoCauchyProblem} and Proposition~\ref{prop:FredholmAbstrakt} we get

\begin{theorem}
\label{thm:DiracFredholm}
The following are equivalent:
\begin{enumerate}[(i)]
\item\label{Dirac0}
The pair $(B_0,Q^{-1}B_1)$ is Fredholm of index $k$;
\item\label{Dirac1}
The pair $(QB_0,B_1)$ is Fredholm of index $k$;
\item\label{Dirac2}
The operator 
$$
(\pi_{B_0^\perp}\circ \res_{\Sigma_0})\oplus (\pi_{B_1^\perp}\circ \res_{\Sigma_1})\oplus D : \FE(M_0,V^R) \to B_0^\perp \oplus B_1^\perp \oplus L^2(M_0,V^L)
$$
is Fredholm of index~$k$;
\item\label{Dirac3}
The restriction 
$$
D:\ker(\pi_{B_0^\perp}\circ \res_{\Sigma_0})\cap \ker(\pi_{B_1^\perp}\circ \res_{\Sigma_1})\to L^2(M_0,V^L)
$$ 
is a Fredholm operator of index~$k$.\qed
\end{enumerate}
\end{theorem}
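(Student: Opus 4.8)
The plan is to derive Theorem~\ref{thm:DiracFredholm} as a direct application of the abstract Proposition~\ref{prop:FredholmAbstrakt}. All of the genuine functional-analytic work has already been done there; what remains is to check that our concrete geometric setup satisfies the hypotheses of that proposition, and then simply translate its conclusions into the notation of the Dirac operator.

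First I would set up the dictionary. We take $E := \FE(M_0,V^R)$, $F := L^2(M_0,V^L)$, $H_0 := L^2(\Sigma_0,V^R)$, $H_1 := L^2(\Sigma_1,V^R)$, and let $L := D$, $r_0 := \res_{\Sigma_0}$, $r_1 := \res_{\Sigma_1}$. By Theorem~\ref{thm:InhomoCauchyProblem} applied to $\Sigma = \Sigma_0$ and to $\Sigma = \Sigma_1$, the maps $r_j \oplus L : E \to H_j \oplus F$ are Hilbert space isomorphisms for $j = 0$ and $j = 1$, which is exactly the standing assumption preceding Proposition~\ref{prop:FredholmAbstrakt}. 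Moreover, under this identification the isomorphism $Q : H_0 \to H_1$ defined by the commutative triangle in Proposition~\ref{prop:FredholmAbstrakt} coincides with the wave evolution operator of~\eqref{eq:Q}, since both are defined by precisely the same diagram involving $\res_{\Sigma_0}$ and $\res_{\Sigma_1}$ restricted to $\ker D$.

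The one hypothesis of Proposition~\ref{prop:FredholmAbstrakt} that is not immediate from Theorem~\ref{thm:InhomoCauchyProblem} is the surjectivity of $r_0 \oplus r_1 : E \to H_0 \oplus H_1$, i.e.\ that $\res_{\Sigma_0} \oplus \res_{\Sigma_1} : \FE(M_0,V^R) \to L^2(\Sigma_0,V^R) \oplus L^2(\Sigma_1,V^R)$ is onto. I expect this to be the main (indeed only) point requiring a genuine argument. Given data $(u_0, u_1)$, one wants a finite-energy section on $M_0$ restricting to $u_0$ on $\Sigma_0$ and to $u_1$ on $\Sigma_1$; one natural construction is to use well-posedness of the Cauchy problem near each boundary component separately, extending $u_0$ to a solution of $Du = 0$ in a collar of $\Sigma_0$ and $u_1$ to a solution in a collar of $\Sigma_1$, then patching with a cutoff function in $t$ (using the splitting $M = \R \times \Sigma$ from~\eqref{eq:Split}). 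The cutoff introduces an $L^2$ error term $Du$, but this is harmless since we only need the resulting section to lie in $\FE(M_0,V^R)$, not in $\ker D$; the collars can be chosen disjoint so the two pieces do not interfere. Alternatively, since $\res_{\Sigma_0}$ alone is already surjective onto $L^2(\Sigma_0,V^R)$ (by Theorem~\ref{thm:InhomoCauchyProblem}), one can first realize $u_0$, transport it forward by the wave evolution to see what it produces on $\Sigma_1$, and then correct by a section supported in a collar of $\Sigma_1$ to hit $u_1$ there; the same collar-cutoff idea makes this precise.

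Once surjectivity of $\res_{\Sigma_0} \oplus \res_{\Sigma_1}$ is established, the four equivalences of Proposition~\ref{prop:FredholmAbstrakt}, with the above substitutions, are verbatim the four statements \eqref{Dirac0}--\eqref{Dirac3} of Theorem~\ref{thm:DiracFredholm}: \eqref{Fred0}$\leftrightarrow$\eqref{Dirac0}, \eqref{Fred1}$\leftrightarrow$\eqref{Dirac1}, \eqref{Fred2}$\leftrightarrow$\eqref{Dirac2} (with $(\pi_{B_0^\perp}\circ r_0)\oplus(\pi_{B_1^\perp}\circ r_1)\oplus L$ becoming $(\pi_{B_0^\perp}\circ \res_{\Sigma_0})\oplus(\pi_{B_1^\perp}\circ \res_{\Sigma_1})\oplus D$), and \eqref{Fred3}$\leftrightarrow$\eqref{Dirac3}. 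The index is preserved throughout because the identifications are isometric isomorphisms. So the proof is: set up the dictionary, verify the surjectivity hypothesis via a collar/cutoff argument, and invoke Proposition~\ref{prop:FredholmAbstrakt}.
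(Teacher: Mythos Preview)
Your proposal is correct and matches the paper's approach exactly: the paper's entire proof is the single sentence ``Combining Theorem~\ref{thm:InhomoCauchyProblem} and Proposition~\ref{prop:FredholmAbstrakt} we get'' followed by a \qed. You have simply spelled out the dictionary and, in particular, supplied the verification of the surjectivity hypothesis $\res_{\Sigma_0}\oplus\res_{\Sigma_1}:\FE(M_0,V^R)\to L^2(\Sigma_0,V^R)\oplus L^2(\Sigma_1,V^R)$, which the paper leaves implicit; your cutoff argument (solve $Du=0$ with the two prescribed Cauchy data separately via Theorem~\ref{thm:InhomoCauchyProblem}, then glue with a smooth cutoff in $t$) is the standard and correct way to do this.
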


\begin{definition}
If these conditions hold then we call $(B_0,B_1)$ a \emph{Dirac-Fredholm pair} of index $k$.
\end{definition}

Condition \eqref{Dirac3} in Theorem~\ref{thm:DiracFredholm} means that we consider the Dirac equation $Du=f$ subject to the boundary conditions $u|_{\Sigma_0}\in B_0$ and $u|_{\Sigma_1}\in B_1$.
We now look at concrete examples.

\subsection{(Anti) Atiyah-Patodi-Singer boundary conditions}
For any subset $I\subset\R$ denote by $\chi_I:\R\to\R$ its characteristic function.
Denote the Riemannian Dirac operators for the boundary parts by $A_0:=A_{\Sigma_0}$ and $A_1:=A_{\Sigma_1}$, compare \eqref{eq:DiracSurface+}.
Since $A_0$ and $A_1$ are self-adjoint elliptic operators on closed Riemannian manifolds they have real and discrete spectrum.
We consider the spectral projector $\chi_I(A_0):L^2(\Sigma_0,V^R)\to L^2(\Sigma_0,V^R)$ and similarly for $A_1$.
This is the orthogonal projector onto the sum of the $A_0$-eigenspaces to all eigenvalues contained in $I$.
Its range will be denoted by $L^2_I(A_0):=\chi_I(A_0)(L^2(\Sigma_0,V^R))\subset L^2(\Sigma_0,V^R)$ and similarly for $A_1$.

Now the \emph{Atiyah-Patodi-Singer boundary conditions} correspond to the choice $B_0=L^2_{(-\infty,0)}(A_0)$ and $B_1=L^2_{(0,\infty)}(A_1)$.
It was shown in \cite[Thm.~7.1]{BS15} that the Dirac operator subject to these boundary conditions is Fredholm.
In other words, the pair $(B_0,B_1) = (L^2_{(-\infty,0)}(A_0),L^2_{(0,\infty)}(A_1))$ is Fredholm and its index is given by
$$
\ind(B_0,B_1)
=
\int_{M_0} \Adach (\nabla)\wedge \ch(\nabla^E) + \int_{\partial M_0}\mathcal{T} - \frac{h(A_{0})+h(A_{1})+\eta(A_{0})-\eta(A_{1})}{2}\, .
$$
Here $\Adach (\nabla)$ is the $\Adach$-form built from the curvature of the Levi-Civita connection $\nabla$ on $M$ and $\ch(\nabla^E)$ is the Chern character form of the curvature of $\nabla^E$.
The form $\mathcal{T}$ is the corresponding transgression form.
In particular, the boundary integral vanishes if the given metric and connections have product structure near the boundary.
Moreover, $h(A)$ denotes the dimension of the kernel of an operator $A$ and $\eta(A)$ its $\eta$-invariant.
See \cite{BS15} for details.

By Remark~\ref{rem:Fredpairs}.\ref{Fredpair2} the complementary boundary conditions $(\APS_0(0)^\perp,\APS_1(0)^\perp) = (L^2_{[0,\infty)}(A_0),L^2_{(-\infty,0]}(A_1))$, the \emph{anti-Atiyah-Patodi-Singer boundary conditions}, are also Fredholm and the index has the opposite sign,
$$
\ind(B_0^\perp,B_1^\perp)
=
-\int_{M_0} \Adach (\nabla)\wedge \ch(\nabla^E) - \int_{\partial M_0}\mathcal{T} + \frac{h(A_{0})+h(A_{1})+\eta(A_{0})-\eta(A_{1})}{2}\, .
$$

\subsection{Generalized Atiyah-Patodi-Singer boundary conditions}\label{ssec:gaps}
For $a\in\R$ we define $\APS_0(a):=L^2_{(-\infty,a)}(A_0)\subset L^2(\Sigma_0,V^R)$ and $\APS_1(a):=L^2_{(a,\infty)}(A_1)\subset L^2(\Sigma_1,V^R)$.
Since cutting the spectrum of the boundary operator at $0$ is somewhat arbitrary we may want to fix $a_0, a_1\in\R$ and consider the boundary conditions $(B_0,B_1) = (\APS_0(a_0), \APS_1(a_1))$.
These boundary conditions are known as \emph{generalized Atiyah-Patodi-Singer boundary conditions}.
Since all eigenvalues of the Riemannian Dirac operators $A_{\Sigma_i}$ are of finite multiplicity, these boundary conditions differ from the Atiyah-Patodi-Singer boundary conditions only by finite dimensional spaces, more precisely,
\begin{align*}
\APS_0(a_0)/\APS_0(0)
&\cong 
L^2_{[0,a_0)}(A_0)\hspace{5pt} \mathrm{for}\hspace{2pt} a_0\geq 0,\\
\APS_0(0)/\APS_0(a_0)
&\cong 
L^2_{[a_0,0)}(A_0)\hspace{5pt} \mathrm{for}\hspace{2pt} a_0<0,
\end{align*}
and similarly for $\APS_1$.
Remark~\ref{rem:Fredpairs}.\ref{Fredpair3} implies that generalized Atiyah-Patodi-Singer boundary conditions also form a Dirac-Fredholm pair.
Setting
$$
W_0:=\begin{cases}
        L^2_{[0,a_0)}(A_0), & a_0\geq 0,
        \\
        L^2_{[a_0,0)}(A_0), & a_0<0,
        \end{cases}
\quad\mbox{ and }\quad
W_1:=\begin{cases}
        L^2_{(0,a_1]}(A_1), & a_1\geq 0,
        \\
        L^2_{(a_1,0]}(A_1), & a_1<0,
        \end{cases}
$$
we get for their index
\begin{align*}
\ind (\APS_0(a_0),&\APS_1(a_1)) \\
&=
\ind (\APS_0(0),\APS_1(0))+\mathrm{sgn}(a_0)\dim (W_0) -\mathrm{sgn}(a_1)\dim (W_1) \\
&=
\int_{M_0} \Adach (\nabla)\wedge \ch(\nabla^E) + \int_{\partial M_0}\mathcal{T} - \frac{h(A_{0})+h(A_{1})+\eta(A_{0})-\eta(A_{1})}{2}\\
&\quad +\mathrm{sgn}(a_0)\dim (W_0) -\mathrm{sgn}(a_1)\dim (W_1).
\end{align*}
In other words, the correction terms in the index formula are given by the total multiplicity of the eigenvalues of $A_j$ between $0$ and $a_j$.

\subsection{Boundary conditions in graph form}\label{sec:graph}
In the previous section we discussed certain finite dimensional modifications of the Atiyah-Patodi-Singer boundary conditions, which lead to corrections in the index formula.
Now we introduce continuous deformations of Dirac-Fredholm pairs, leaving the index unchanged.
Formally, the definition coincides with that of $D$-elliptic boundary conditions for the elliptic Dirac operator on \emph{Riemannian} manifolds as introduced in \cite[Def.~7.5 and Thm.~7.11]{BB12}.

\begin{definition}\label{def:ellbc}
We call a pair $(B_0,B_1)$ of closed subspaces $B_i\subset L^2(\Sigma_i,V^R)$ \emph{boundary conditions in graph form} if there are $L^2$-orthogonal decompositions
\begin{equation}
L^2(\Sigma_i,V^R)=V_i^-\oplus W_i^-\oplus V_i^+\oplus W_i^+, \quad\quad i=0,1,
\label{eq:Dhyp}
\end{equation}
such that
\begin{enumerate}[(i)]
\item $W_i^+, W_i^-$ are finite dimensional;
\item $W_i^-\oplus V_i^-=L^2_{(-\infty,a_i)}(A_i)$ and $W_i^+\oplus V_i^+=L^2_{[a_i,\infty)}(A_i)$ for some $a_i\in\mathbb{R}$;
\item There are bounded linear maps $g_0:V_0^-\to V_0^+$ and $g_1:V_1^+\to V_1^-$ such that
\begin{align*}
B_0&=W_0^+\oplus\Gamma(g_0),\\
B_1&=W_1^-\oplus\Gamma(g_1),
\end{align*}
where $\Gamma(g_{0/1}):=\lbrace v+g_{0/1}v\hspace{2pt}\vert\hspace{2pt}v\in V_{0/1}^\mp\rbrace$ denotes the graph of $g_{0/1}$.
\end{enumerate}
\end{definition}

\begin{remark}
In the setting of Definition \ref{def:ellbc} we have
$$
V_i^-\oplus V_i^+=\Gamma(g_i)\oplus\Gamma(-g_i\adj)
$$
where both decompositions are orthogonal. 
With respect to the splitting $V_i^-\oplus V_i^+$ the projections onto $\Gamma(g_i)$ are given by
\begin{equation}
\begin{pmatrix}
\id & 0 \\ 
g_i & 0
\end{pmatrix} 
\begin{pmatrix}
\id & -g_i\adj \\ 
g_i & \id
\end{pmatrix} ^{-1}
=
\begin{pmatrix}
(\id +g_i\adj g_i)^{-1} & (\id +g_i\adj g_i)^{-1}g_i\adj \\ 
g_i(\id +g_i\adj g_i )^{-1} & g_i(\id +g_i\adj g_i)^{-1}g_i\adj
\end{pmatrix} 
\label{eq:GraphProj}
\end{equation}
see \cite[Lemma~7.7~and~Remark~7.8]{BB12}.
Here $g_i\adj:V_i^\pm \to V_i^\mp$ denotes the adjoint linear map.
\end{remark}

The next Lemma shows that deforming Atiyah-Patodi-Singer boundary conditions for the Dirac operator to a graph preserves Fredholm property as well as the index.

\begin{proposition}\label{prop:graph}
Let $a_0,a_1\in\R$ be given.
Then there exists an $\eps>0$ such that for any bounded linear maps $g_i:\APS_i(a_i)\to \APS_i(a_i)^\perp$ the pair $(\Gamma(g_0),\Gamma(g_1))$ is a Dirac-Fredholm pair of the same index as $(\APS_0(a_0),\APS_1(a_1))$ provided 
\begin{enumerate}[(A)]
\item\label{g0g1kompakt}
$g_0$ or $g_1$ is compact or
\item\label{g0g1klein} 
$\|g_0\|\cdot\|g_1\|<\eps$.
\end{enumerate}
\end{proposition}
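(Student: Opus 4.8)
The plan is to reduce the statement, by means of Theorem~\ref{thm:DiracFredholm} and Lemma~\ref{projections}, to the Fredholm property of a single operator between the boundary Hilbert spaces, which will turn out to be a compact, respectively norm-small, perturbation of the operator governing the (generalized) APS conditions. Write $V_0:=\APS_0(a_0)$ and $V_1:=\APS_1(a_1)$, so that $L^2(\Sigma_i,V^R)=V_i\oplus V_i^\perp$ for $i=0,1$, and decompose the unitary wave evolution operator accordingly,
\[
Q=\begin{pmatrix}Q_{11}&Q_{12}\\ Q_{21}&Q_{22}\end{pmatrix}\colon\ V_0\oplus V_0^\perp\longrightarrow V_1\oplus V_1^\perp ,
\]
so that each block has operator norm at most $\|Q\|=1$. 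Two facts about this decomposition are used, both drawn from \cite{BS15}: (a) $Q_{21}\colon V_0\to V_1^\perp$ is a Fredholm operator of index $k:=\ind(\APS_0(a_0),\APS_1(a_1))$ --- this is the Fredholm property of the (generalized) APS conditions (Section~\ref{ssec:gaps}) combined with Lemma~\ref{projections}; and (b) the ``wrong--direction'' blocks $Q_{11}\colon V_0\to V_1$ and $Q_{22}\colon V_0^\perp\to V_1^\perp$ are \emph{compact} --- equivalently, modulo compact operators $Q$ carries $\APS_0(a_0)$ into $\APS_1(a_1)^\perp$ and $\APS_0(a_0)^\perp$ into $\APS_1(a_1)$. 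Fact (b) reflects finite propagation speed for $D$ and is contained in the proof of \cite[Thm.~7.1]{BS15}.

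For the reduction, fix bounded maps $g_0\colon V_0\to V_0^\perp$ and $g_1\colon V_1\to V_1^\perp$. By Theorem~\ref{thm:DiracFredholm} and Lemma~\ref{projections}, $(\Gamma(g_0),\Gamma(g_1))$ is a Dirac-Fredholm pair of index $k$ if and only if $\pi_{\Gamma(g_1)^\perp}|_{Q\Gamma(g_0)}\colon Q\Gamma(g_0)\to\Gamma(g_1)^\perp$ is Fredholm of index $k$. Now $v\mapsto Q(v+g_0v)$ is an isomorphism $V_0\to Q\Gamma(g_0)$, $w\mapsto w-g_1\adj w$ is an isomorphism $V_1^\perp\to\Gamma(g_1)^\perp$, and formula~\eqref{eq:GraphProj} (applied to $-g_1\adj\colon V_1^\perp\to V_1$) shows that $\pi_{V_1^\perp}\circ\pi_{\Gamma(g_1)^\perp}$ sends $(\xi_1,\xi_1^\perp)\in V_1\oplus V_1^\perp$ to $(\id+g_1g_1\adj)^{-1}(\xi_1^\perp-g_1\xi_1)$. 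Composing $\pi_{\Gamma(g_1)^\perp}|_{Q\Gamma(g_0)}$ with these two isomorphisms therefore turns it into $(\id+g_1g_1\adj)^{-1}\widetilde S\colon V_0\to V_1^\perp$, where
\[
\widetilde S:=(Q_{21}+Q_{22}g_0)-g_1(Q_{11}+Q_{12}g_0)=Q_{21}+Q_{22}g_0-g_1Q_{11}-g_1Q_{12}g_0 .
\]
Since $\id+g_1g_1\adj$ is a bounded automorphism of $V_1^\perp$, we conclude that $(\Gamma(g_0),\Gamma(g_1))$ is a Dirac-Fredholm pair of index $k$ if and only if $\widetilde S$ is Fredholm of index $k$; for $g_0=g_1=0$ this reads $\widetilde S=Q_{21}$, consistent with (a).

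Now invoke fact (b): $Q_{22}g_0$ and $g_1Q_{11}$ are compact, so $\widetilde S=Q_{21}-g_1Q_{12}g_0+K$ with $K$ compact, and $\widetilde S$ is Fredholm of index $k$ precisely when $Q_{21}-g_1Q_{12}g_0$ is. If hypothesis~(\ref{g0g1kompakt}) holds, then $g_1Q_{12}g_0$ is compact (it involves the compact map $g_0$ or $g_1$ as a factor); hence $Q_{21}-g_1Q_{12}g_0$ is a compact perturbation of the Fredholm operator $Q_{21}$, and so Fredholm of index $k$, with no condition on $\eps$ needed. If hypothesis~(\ref{g0g1klein}) holds, let $\eps>0$ be the reduced minimum modulus of $Q_{21}$, i.e.\ the infimum of $\|Q_{21}x\|$ over all unit vectors $x\perp\ker Q_{21}$; it is strictly positive because $Q_{21}$, being Fredholm, has closed range, and it depends only on $a_0$ and $a_1$. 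Then
\[
\|g_1Q_{12}g_0\|\leq\|g_1\|\,\|Q_{12}\|\,\|g_0\|\leq\|g_0\|\,\|g_1\|<\eps
\]
by $\|Q_{12}\|\leq1$, so $Q_{21}-g_1Q_{12}g_0$ is again Fredholm of index $k$ by the standard stability of the Fredholm property and of the index under perturbations of norm smaller than the reduced minimum modulus \cite[Ch.~IV]{K95}. In both cases $\widetilde S$ is Fredholm of index $k$, which by the reduction proves the claim. (Incidentally, the same computation with $g_0=0$ or $g_1=0$ shows that the one-sided deformations $(\APS_0(a_0),\Gamma(g_1))$ and $(\Gamma(g_0),\APS_1(a_1))$ are always Dirac-Fredholm pairs of index $k$, for arbitrary bounded $g_i$.)

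The routine part of the argument is the reduction --- graphs, orthogonal projections, and the bookkeeping of indices. The essential input is fact (b), which is where the Lorentzian geometry enters, through finite propagation speed; it is precisely (b) that makes hypothesis~(\ref{g0g1klein}) usable, since bounding only the product $\|g_0\|\,\|g_1\|$ controls the single term $g_1Q_{12}g_0$, whereas the terms $Q_{22}g_0$ and $g_1Q_{11}$ --- which are not small, being controlled only by $\|g_0\|$ and $\|g_1\|$ individually --- are absorbed instead into the compact remainder $K$.
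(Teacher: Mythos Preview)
Your argument is correct and follows essentially the same route as the paper's proof: reduce, via Theorem~\ref{thm:DiracFredholm} and Lemma~\ref{projections}, to a single operator between boundary spaces, decompose it into four pieces, absorb two of them (those involving the ``wrong-direction'' blocks of $Q$) into a compact remainder, and handle the remaining cross term $g_1Q_{12}g_0$ under hypothesis~(\ref{g0g1kompakt}) or (\ref{g0g1klein}). The only cosmetic differences are that the paper transports $g_1$ to $\Sigma_0$ via $\tilde g_1=Q^{-1}g_1Q$ and computes on $L^2(\Sigma_0,V^R)$, whereas you push $\Gamma(g_0)$ to $\Sigma_1$ via $Q$ and work there; and that the compactness input you label fact~(b) is cited in the paper as \cite[Lemma~2.6]{BS16} rather than \cite{BS15} (and is a microlocal statement about $Q$ as a Fourier integral operator rather than a finite-propagation-speed statement per se).
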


\begin{proof}
We put $\gt_1:=Q^{-1}\circ g_1 \circ Q$.
Now $(\Gamma(g_0),\Gamma(g_1))$ is a Dirac-Fredholm pair of index $k$ if and only if $(\Gamma(g_0),Q^{-1}\Gamma(g_1))=(\Gamma(g_0),\Gamma(\gt_1))$ is a Fredholm pair of index $k$.
By Lemma \ref{projections} this is equivalent to
$$
\pi_{\Gamma(g_0)}\vert_{\Gamma(-\gt_1\adj)}:\Gamma(-\gt_1\adj)\to\Gamma(g_0)
$$
being a Fredholm operator of index $-k$. 
Since the maps $(\id+g_i):\APS_i(a_i)\to\Gamma(g_i)$ and $(\id-g_i\adj):\APS_i(a_i)^\perp\to\Gamma(-g_i\adj)$ are isomorphisms, this is equivalent to
$$
\xymatrix{
L:Q^{-1}\APS_1(a_1)^\perp \ar[r]^{\quad\quad\cong} &\Gamma(-\gt_1^\perp)\ar[r]^{\pi_{\Gamma(g_0)}}&\Gamma(g_0)\ar[r]^{\cong\quad}& \APS_0(a_0)
}
$$
being Fredholm of index $-k$.
We write $B_0:=\APS_0(a_0)$ and $B_1:=Q^{-1}\APS_1(a_1)$.
Using \eqref{eq:GraphProj} we see that $L$ takes the form
\begin{align*}
L
&=
\begin{pmatrix}
\id & 0
\end{pmatrix} 
\begin{pmatrix}
(\id+g_0\adj g_0)^{-1} & (\id+g_0\adj g_0)^{-1}g_0\adj \\ 
g_0(\id+g_0\adj g_0)^{-1} & g_0(\id+g_0\adj g_0)^{-1}g_0\adj
\end{pmatrix} 
\begin{pmatrix}
\pi_{B_0}\vert_{B_1} & \pi_{B_0}\vert_{B_1^\perp} \\ 
\pi_{B_0^\perp}\vert_{B_1} & \pi_{B_0^\perp}\vert_{B_1^\perp}
\end{pmatrix} 
\begin{pmatrix}
-\gt_1\adj \\ 
\id
\end{pmatrix} \\
&=
\begin{pmatrix}
(\id+g_0\adj g_0)^{-1} & (\id+g_0\adj g_0)^{-1}g_0\adj
\end{pmatrix} 
\begin{pmatrix}
-(\pi_{B_0}\vert_{B_1}) \gt_1\adj + \pi_{B_0}\vert_{B_1^\perp} \\ 
-(\pi_{B_0^\perp}\vert_{B_1})\gt_1\adj + \pi_{B_0^\perp}\vert_{B_1^\perp}
\end{pmatrix} \\
&=
(\id+g_0\adj g_0)^{-1}(-(\underbrace{\pi_{B_0}\vert_{B_1}}_{=:\alpha}) \gt_1\adj
+\underbrace{\pi_{B_0}\vert_{B_1^\perp}}_{=:\beta}
-\underbrace{g_0\adj(\pi_{B_0^\perp}\vert_{B_1})\gt_1\adj}_{=:\gamma}
+g_0\adj(\underbrace{\pi_{B_0^\perp}\vert_{B_1^\perp}}_{=:\delta})).
\end{align*}
The operators $\alpha$ and $\delta$ are compact by \cite[Lemma~2.6]{BS16}.
If $g_0$ or $g_1$ (and hence $g_0\adj$ or $\gt_1\adj$) is compact then $\gamma$ is compact as well.
Since $\beta=\pi_{B_0}\vert_{B_1^\perp}$ is a Fredholm operator of index $\ind(B_0^\perp,B_1^\perp)=-\ind(B_0,B_1)$ the same is true for $L$.

If $\|g_0\|\cdot\|g_1\|<\eps$ then 
$$
\|\gamma\| 
\le 
\|g_0\adj\|\cdot\|\pi_{B_0^\perp}\vert_{B_1}\|\cdot\|\gt_1\adj\|
=
\|g_0\|\cdot\|\pi_{B_0^\perp}\vert_{B_1}\|\cdot\|g_1\|
<
\eps
$$
is small so that $\beta-\gamma$ (and hence $L$) is again a Fredholm operator of the same index as $\beta$.
\end{proof}

\begin{remark}
In Example~\ref{exa:statisch} we will see that conditions \eqref{g0g1kompakt} and \eqref{g0g1klein} cannot be dropped.
Without these assumptions boundary conditions in graph form do not give rise to a Fredholm operator in general.
\end{remark}

\begin{remark}
Proposition~\ref{prop:graph} can be applied in particular if $g_0=0$ or $g_1=0$.
Thus in the setting of the lemma $(\Gamma(g_0),\APS_1(a_1))$ and $(\APS_0(a_0),\Gamma(g_1))$ are Dirac-Fredholm pairs of index $\ind(\APS_0(a_0),\APS_1(a_1))$.
\end{remark}

If $g:B\to B^\perp$ is a bounded linear map and $\widetilde{B}\subset B$, then $\Gamma(g)/\Gamma(g\vert_{\widetilde{B}})\cong B/\widetilde{B}$. 
Combining Section~\ref{ssec:gaps} and Proposition~\ref{prop:graph} we get the following result.

\begin{cor}
Let $(B_0,B_1)$ be boundary conditions in graph form with $g_0$ or $g_1$ compact or $\|g_0\|\cdot\|g_1\|$ sufficiently small.
Then $(B_0,B_1)$ is a Dirac-Fredholm pair and its index is given by
$$
\ind(B_0,B_1)=\ind(L^2_{(-\infty,a_0)}(A_0),L^2_{[a_1,\infty)}(A_1))+\dim(W_0^+)-\dim (W_0^-)+\dim (W_1^-)-\dim(W_1^+).
$$
\end{cor}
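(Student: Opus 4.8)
The plan is to derive the Corollary from Proposition~\ref{prop:graph} together with the additivity of the Fredholm index under finite–dimensional enlargement of subspaces, Remark~\ref{rem:Fredpairs}.\ref{Fredpair3}, exactly as the finite–dimensional modifications were handled in Section~\ref{ssec:gaps}; the point is to absorb the spaces $W_i^\pm$ into correction terms.

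First I would replace the partial graph maps $g_0:V_0^-\to V_0^+$ and $g_1:V_1^+\to V_1^-$ by their extensions by zero, namely $\hat g_0:L^2_{(-\infty,a_0)}(A_0)=V_0^-\oplus W_0^-\to L^2_{[a_0,\infty)}(A_0)$ with $\hat g_0|_{W_0^-}=0$ and $\hat g_1:L^2_{[a_1,\infty)}(A_1)=V_1^+\oplus W_1^+\to L^2_{(-\infty,a_1)}(A_1)$ with $\hat g_1|_{W_1^+}=0$. Since the $W_i^\pm$ are finite dimensional and $L^2$–orthogonal to the $V_i^\pm$, one has $\|\hat g_i\|=\|g_i\|$, $\hat g_i$ is compact iff $g_i$ is, and the graphs split orthogonally as
$$
\Gamma(\hat g_0)=\Gamma(g_0)\oplus W_0^-,\qquad \Gamma(\hat g_1)=\Gamma(g_1)\oplus W_1^+,
$$
whereas by Definition~\ref{def:ellbc} the given conditions split as $B_0=\Gamma(g_0)\oplus W_0^+$ and $B_1=\Gamma(g_1)\oplus W_1^-$. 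Because the spectrum of $A_1$ is discrete I may choose $\tilde a_1\le a_1$ in the same spectral gap, so that $L^2_{[a_1,\infty)}(A_1)=\APS_1(\tilde a_1)$ and $L^2_{(-\infty,a_1)}(A_1)=\APS_1(\tilde a_1)^\perp$ while the decomposition \eqref{eq:Dhyp} and the data $g_i,W_i^\pm,V_i^\pm$ are unaffected; note also $L^2_{(-\infty,a_0)}(A_0)=\APS_0(a_0)$ and $L^2_{[a_0,\infty)}(A_0)=\APS_0(a_0)^\perp$ by definition. Thus $\hat g_0$ maps $\APS_0(a_0)$ to its complement and $\hat g_1$ maps $\APS_1(\tilde a_1)$ to its complement, and with $\eps>0$ the constant furnished by Proposition~\ref{prop:graph} for the reference point $(a_0,\tilde a_1)$, the compactness resp.\ small–norm hypothesis on $g_0,g_1$ passes to $\hat g_0,\hat g_1$, so Proposition~\ref{prop:graph} applies: $(\Gamma(\hat g_0),\Gamma(\hat g_1))$ is a Dirac–Fredholm pair with
$$
\ind(\Gamma(\hat g_0),\Gamma(\hat g_1))=\ind(\APS_0(a_0),\APS_1(\tilde a_1))=\ind\big(L^2_{(-\infty,a_0)}(A_0),L^2_{[a_1,\infty)}(A_1)\big)=:I.
$$

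By the definition of a Dirac–Fredholm pair (Theorem~\ref{thm:DiracFredholm}, condition \eqref{Dirac0}) this says that $(\Gamma(\hat g_0),Q^{-1}\Gamma(\hat g_1))$ is a Fredholm pair of index $I$ in $L^2(\Sigma_0,V^R)$. I would then pass to $(B_0,Q^{-1}B_1)$ in four steps, each an application of Remark~\ref{rem:Fredpairs}.\ref{Fredpair3} (for the last two combined with Remark~\ref{rem:Fredpairs}.\ref{Fredpair1} to act on the second entry). From the splittings above, $\Gamma(g_0)$ is a closed subspace of $\Gamma(\hat g_0)$ of codimension $\dim W_0^-$ and of $B_0$ of codimension $\dim W_0^+$, while, applying the isomorphism $Q^{-1}$, the space $Q^{-1}\Gamma(g_1)$ is a closed subspace of $Q^{-1}\Gamma(\hat g_1)$ of codimension $\dim W_1^+$ and of $Q^{-1}B_1$ of codimension $\dim W_1^-$; here the identifications $\Gamma(\hat g_0)/\Gamma(g_0)\cong W_0^-$ and $\Gamma(\hat g_1)/\Gamma(g_1)\cong W_1^+$ are instances of $\Gamma(g)/\Gamma(g|_{\widetilde B})\cong B/\widetilde B$ recorded just before the Corollary, and $B_0/\Gamma(g_0)\cong W_0^+$, $B_1/\Gamma(g_1)\cong W_1^-$ come from the orthogonal decompositions. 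Shrinking the first entry from $\Gamma(\hat g_0)$ to $\Gamma(g_0)$, then enlarging it to $B_0$, then shrinking the second entry from $Q^{-1}\Gamma(\hat g_1)$ to $Q^{-1}\Gamma(g_1)$, then enlarging it to $Q^{-1}B_1$ — Fredholmness being preserved at every step — yields
$$
\ind(B_0,Q^{-1}B_1)=I-\dim W_0^-+\dim W_0^+-\dim W_1^++\dim W_1^- ,
$$
so $(B_0,B_1)$ is a Dirac–Fredholm pair and $\ind(B_0,B_1)=\ind(B_0,Q^{-1}B_1)$ is, after rearranging, the asserted expression.

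The only delicate point is the interval bookkeeping in the second paragraph: one must reconcile the half–open spectral intervals appearing in Definition~\ref{def:ellbc}(ii) with those in the definitions of $\APS_0(a_0)$ and $\APS_1(a_1)$, via the passage from $a_1$ to $\tilde a_1$, so that Proposition~\ref{prop:graph} is applicable verbatim and its reference index is exactly $\ind(L^2_{(-\infty,a_0)}(A_0),L^2_{[a_1,\infty)}(A_1))$. Everything after that — the four inclusions, their codimensions, and the sign of each index jump — is the routine additivity of the Fredholm index collected in Remark~\ref{rem:Fredpairs}.
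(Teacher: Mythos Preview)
Your argument is correct and follows the same route the paper indicates: it is precisely the ``combine Proposition~\ref{prop:graph} with the finite-dimensional additivity of Section~\ref{ssec:gaps}'' strategy, carried out in detail via the extensions $\hat g_i$ and four applications of Remark~\ref{rem:Fredpairs}.\ref{Fredpair3}. Your explicit handling of the half-open interval mismatch between $L^2_{[a_1,\infty)}(A_1)$ and $\APS_1(a_1)$ by passing to $\tilde a_1$ in the same spectral gap is a point the paper leaves implicit.
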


\subsection{Local boundary conditions}

Suppose we have subbundles $E_j\subset V^R|_{\Sigma_j}$.
Then the boundary condition $B_j=L^2(\Sigma_j,E_j)\subset L^2(\Sigma_j,V^R)$ is called a \emph{local boundary condition}.
The following Lemma relates local boundary conditions to graph deformations of generalized Atiyah-Patodi-Singer boundary conditions as discussed in Section \ref{sec:graph}.

\begin{proposition}\label{prop:graphs}
Let $E_j\subset V^R\vert_{\Sigma_j}$ be a subbundle and $B_j=L^2(\Sigma_j,E_j)$.
Then the following are equivalent:
\begin{enumerate}[(i)]
 \item\label{prop:graph1} 
 There exists an $L^2$-orthogonal splitting 
 $$
 L^2(\Sigma_j,V^R)=V_j^-\oplus W_j^-\oplus V_j^+\oplus W_j^+
 $$
 and bounded linear maps $g_j:V_j^\mp\to V_j^\pm$ as in Definition~\ref{def:ellbc} such that $B_j=W_j^\pm\oplus\Gamma(g_j)$;
 \item\label{prop:graph2} 
 For every $x\in\Sigma_j$ and $\xi\in T_x\Sigma_j$, $\xi\neq 0$, the projection $\pi_{(E_j)_x}:(V^R\vert_{\Sigma_j})_x\to (E_j)_x$ restricts to an isomorphism from the sum of eigenspaces to the negative (for $j=0$) or positive (for $j=1$) eigenvalues of $\rmi \sigma_{A_j}(\xi)$ onto $(E_j)_x$.
\end{enumerate}
\end{proposition}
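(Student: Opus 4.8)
The strategy is to interpose the intermediate statement $(\ast)$: the pair $(B_0,\APS_0(0)^\perp)$ (for $j=0$), respectively $(B_1,\APS_1(0)^\perp)$ (for $j=1$), is a Fredholm pair; and then to prove that each of \eqref{prop:graph1} and \eqref{prop:graph2} is equivalent to $(\ast)$. The two cases $j=0$ and $j=1$ are parallel once the roles of the positive and negative spectrum of $A_j$ are interchanged, so I treat only $j=0$, where $\APS_0(0)=L^2_{(-\infty,0)}(A_0)$ and $\APS_0(0)^\perp=L^2_{[0,\infty)}(A_0)$.

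\eqref{prop:graph1} $\Leftrightarrow (\ast)$. For ``$\Rightarrow$'' one reads off directly from $B_0=W_0^+\oplus\Gamma(g_0)$ (with data as in Definition~\ref{def:ellbc}) that $\pi_{\APS_0(a_0)}\vert_{B_0}$ has kernel $W_0^+$ and closed image $V_0^-$ of finite codimension, hence is Fredholm; so $(B_0,\APS_0(a_0)^\perp)$ is a Fredholm pair by Lemma~\ref{projections}, and then so is $(B_0,\APS_0(0)^\perp)$ by Remark~\ref{rem:Fredpairs}.\ref{Fredpair3}. For ``$\Leftarrow$'' assume $\Phi:=\pi_{\APS_0(0)}\vert_{B_0}$ is Fredholm (Lemma~\ref{projections}), put $a_0:=0$, $W_0^+:=\ker\Phi$, $V_0^-:=\Phi(B_0\ominus W_0^+)$, and complete these to orthogonal decompositions $L^2_{[0,\infty)}(A_0)=W_0^+\oplus V_0^+$ and $L^2_{(-\infty,0)}(A_0)=W_0^-\oplus V_0^-$; here $W_0^\pm$ are finite-dimensional. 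The bounded inverse of the isomorphism $\Phi\vert_{B_0\ominus W_0^+}\colon B_0\ominus W_0^+\xrightarrow{\cong}V_0^-$ has the form $v\mapsto v+g_0(v)$ with a bounded map $g_0\colon V_0^-\to V_0^+$, and one checks $B_0=W_0^+\oplus\Gamma(g_0)$, which is \eqref{prop:graph1}. (Alternatively this equivalence is contained in \cite[Thm.~7.11]{BB12}.)

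\eqref{prop:graph2} $\Leftrightarrow (\ast)$. Recall that $\chi_{(-\infty,0)}(A_0)$ and $\chi_{[0,\infty)}(A_0)$ are pseudodifferential operators of order $0$ with principal symbols, at $(x,\xi)$ with $\xi\neq 0$, the orthogonal projections $\Pi^-_0(x,\xi)$ and $\Pi^+_0(x,\xi)=\id-\Pi^-_0(x,\xi)$ onto the sums of the negative, resp.\ positive, eigenspaces of the self-adjoint endomorphism $\rmi\sigma_{A_0}(\xi)$, while $\pi_{B_0}$ is the order-$0$ operator of multiplication by the pointwise orthogonal projection $\pi_{(E_0)_x}$. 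Form the order-$0$ operator
$$
R:=\chi_{(-\infty,0)}(A_0)\,\pi_{B_0}+\chi_{[0,\infty)}(A_0)\,(\id-\pi_{B_0}),
$$
whose principal symbol is $\rho(x,\xi)=\Pi^-_0(x,\xi)\,\pi_{(E_0)_x}+\Pi^+_0(x,\xi)\,(\id-\pi_{(E_0)_x})$. A short linear-algebra argument shows that, for each $x$ and each $\xi\neq 0$, the endomorphism $\rho(x,\xi)$ is invertible if and only if $V^R_x=(E_0)_x\oplus\mathrm{ran}\,\Pi^+_0(x,\xi)$, which — by passing to orthogonal complements — holds if and only if $\pi_{(E_0)_x}$ restricts to an isomorphism from $\mathrm{ran}\,\Pi^-_0(x,\xi)$ onto $(E_0)_x$; thus $R$ is elliptic exactly when \eqref{prop:graph2} holds. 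On the other hand, relative to the orthogonal decompositions $B_0\oplus B_0^\perp$ of the domain and $\APS_0(0)\oplus\APS_0(0)^\perp$ of the target, $R$ is block-diagonal, equal to $\big(\chi_{(-\infty,0)}(A_0)\vert_{B_0}\big)\oplus\big(\chi_{[0,\infty)}(A_0)\vert_{B_0^\perp}\big)=\Phi\oplus\big(\pi_{\APS_0(0)^\perp}\vert_{B_0^\perp}\big)$, and by Remark~\ref{rem:Fredpairs}.\ref{Fredpair2} and Lemma~\ref{projections} its two blocks are Fredholm simultaneously, namely precisely when $(B_0,\APS_0(0)^\perp)$ is a Fredholm pair; hence $R$ is Fredholm if and only if $(\ast)$ holds. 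Since an order-$0$ operator on the closed manifold $\Sigma_0$ is Fredholm if and only if its principal symbol is invertible for all $(x,\xi)$ with $\xi\neq 0$, combining the last two facts gives \eqref{prop:graph2} $\Leftrightarrow (\ast)$. For $j=1$ one argues identically with the positive eigenspaces of $\rmi\sigma_{A_1}(\xi)$, choosing in the converse of the first equivalence the cutoff $a_1$ strictly between $0$ and the smallest positive eigenvalue of $A_1$, so that $L^2_{[a_1,\infty)}(A_1)=\APS_1(0)$.

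The step I expect to be the main obstacle is this second equivalence, and within it the symbol bookkeeping: one has to invoke the correct principal symbol of the spectral projectors of $A_j$ (which is why the condition must be phrased through the self-adjoint endomorphism $\rmi\sigma_{A_j}(\xi)$, and why negative eigenspaces enter for $j=0$ and positive ones for $j=1$), verify that the ``filled-in'' operator $R$ is elliptic exactly under the transversality in \eqref{prop:graph2}, and reconcile the open/closed conventions in the definition of $\APS_j$ with the spectral windows of Definition~\ref{def:ellbc}. The remaining ingredients are standard: the statements about spectral projectors and elliptic operators on closed manifolds are classical, and the construction in the first equivalence is elementary Hilbert-space linear algebra.
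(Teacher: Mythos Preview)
Your proof is correct, and the overall architecture---interposing a Fredholm condition and proving each of \eqref{prop:graph1} and \eqref{prop:graph2} equivalent to it---matches the paper's. The details differ in two respects worth noting.

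First, the intermediate condition and the auxiliary operator are not the same. The paper's bridge is the Fredholmness of the difference of projections $P_j-\pi_{\APS_j(a)^\perp}$ on all of $L^2(\Sigma_j,V^R)$; for the equivalence of this with \eqref{prop:graph2} it simply invokes \cite[Thm.~7.20]{BB12}. You instead take $(\ast)$ to be the Fredholm-\emph{pair} condition $(B_0,\APS_0(0)^\perp)$ and build a different order-$0$ operator $R=\chi_{(-\infty,0)}(A_0)\,\pi_{B_0}+\chi_{[0,\infty)}(A_0)\,(\id-\pi_{B_0})$, whose block-diagonal structure relative to $B_0\oplus B_0^\perp$ and $\APS_0(0)\oplus\APS_0(0)^\perp$ makes the link to $(\ast)$ transparent. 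Your route is more self-contained: you actually carry out the principal-symbol computation (using that the spectral projectors of $A_j$ are $\Psi$DOs with symbols $\Pi^\pm$) rather than outsourcing it. The paper's route, in exchange, ties the result directly to the existing elliptic theory of \cite{BB12}.

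Second, for \eqref{prop:graph1}$\Rightarrow(\ast)$ the paper writes $P_j-\pi_{\APS_j(0)^\perp}$ explicitly as a $2\times 2$ matrix (after reducing to $W_j^\pm=0$) and observes it is an isomorphism, whereas you read off kernel and cokernel of $\pi_{\APS_0(a_0)}|_{B_0}$ directly from the graph description; the construction in the converse direction is essentially the same in both proofs. One small point: in your linear-algebra step, the passage from ``$\rho(x,\xi)$ invertible'' to ``$V^R_x=(E_0)_x\oplus\mathrm{ran}\,\Pi^+_0(x,\xi)$'' to ``$\pi_{(E_0)_x}|_{\mathrm{ran}\,\Pi^-_0(x,\xi)}$ is an isomorphism'' silently uses that for complementary-dimensional subspaces $A,B$ of a finite-dimensional inner-product space one has $A\cap B=0\Leftrightarrow A^\perp\cap B^\perp=0$; this is what makes the two transversality conditions coming from $\ker\rho$ collapse to a single one. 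It is elementary, but worth stating since it is the hinge of that equivalence.
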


\begin{proof}
First note that the fiberwise projections $\pi_{(E_j)_x}$ yield a projection map 
$$
P_j:L^2(\Sigma_j,V^R)\to L^2(\Sigma_j,E_j).
$$
Then \eqref{prop:graph2} is equivalent to the operator
$$
P_j-\pi_{\APS_j(a)^\perp}:L^2(\Sigma_j,V^R)\to L^2(\Sigma_j,V^R)
$$
being Fredholm for some (and then all) $a\in\mathbb{R}$, as stated in \cite[Thm.~7.20]{BB12}.

First we show that \eqref{prop:graph1} implies \eqref{prop:graph2}.
Since the sum of a Fredholm operator and a finite rank operator is again Fredholm, we can assume that $W_j^-=W_j^+=0$, $V_j^-=\APS_j(0)$ and $V_j^+=\APS_j(0)^\perp$.
With respect to the splitting $L^2(\Sigma_j,V^R)=V_j^-\oplus V_j^+$ we then have by \eqref{eq:GraphProj}
$$
\pi_{\APS_j(0)^\perp}=\begin{pmatrix}
0 & 0 \\ 
0 & \id
\end{pmatrix}
\quad\mbox{ and }\quad
P_j=
\begin{pmatrix}
\id & 0 \\ 
g_j & 0
\end{pmatrix} 
\begin{pmatrix}
\id & -g_j\adj \\ 
g_j & \id
\end{pmatrix} ^{-1}
$$
and hence
$$
P_j-\pi_{\APS_j(0)^\perp}=
\begin{pmatrix}
\id & 0 \\ 
0 & -\id
\end{pmatrix} 
\begin{pmatrix}
\id & -g_j\adj \\ 
g_j & \id
\end{pmatrix} ^{-1}
$$
is an isomorphism.

Now we show that (\ref{prop:graph2}) implies (\ref{prop:graph1}). 
We construct the decomposition in (\ref{prop:graph1}) and the map $g_j$ for $j=0$.
The case $j=1$ is analogous with the labels $+$ and $-$ interchanged.

Assuming that $P-\pi_{\APS_0(0)^\perp}$ is Fredholm we set
\begin{align*}
W_0^+&:=B_0\cap \APS_0(0) ^\perp,& 
V_0^+&:=(W_0^+)^\perp\cap  \APS_0(0)^\perp ,\\
V_0^-&:=\pi_{\APS_0(0)}(B_0),& 
W_0^-&:=(V_0^-)^\perp\cap\APS_0(0).
\end{align*}
We then have $V_0^+\oplus W_0^+=L^2_{[0,\infty)}(\Sigma_0,V^R)$ and $V_0^-\oplus W_0^-=L^2_{(-\infty,0)}(\Sigma_0,V^R)$.
Furthermore, for $w\in W_0^+$ we have 
$$
(P_0-\pi_{\APS_0(0)^\perp})(w)=w-w=0,
$$
so $W_0^+$ is contained in the kernel of $P_0-\pi_{\APS_0(0)^\perp}$ and is hence finite dimensional.
For $w\in W_0^-$ we observe
$$
( w,P_0x-\pi_{\APS_0(0)^\perp }x )_{L^2}
=
(w,\pi_{\APS_0(0)}P_0x)_{L^2} - ( w,\pi_{\APS_0(0)^\perp }x)_{L^2}
=0-0=0
$$ 
for all $x\in L^2(\Sigma_0,V^R)$.
Thus $W_0^-$ is contained in the orthogonal complement of the range of $P_0-\pi_{\APS_0(0)^\perp}$ and is hence finite dimensional.
 
Setting $U_0:=(W_0^+)^\perp\cap B_0$ we have $B_0=W_0^+\oplus U_0$ and $\pi_{\APS_0(0)}|_{U_0}:U_0\to V_0^-$ is an isomorphism.
Now the composition
$$
\xymatrix{
g_0:V_0^- \ar[rr]^{\quad(\pi_{\APS_0(0)}|_{U_0})^{-1}} &&U_0\ar[rr]^{\pi_{\APS_0(0)^\perp }}&&V_0^+
}
$$
is a bounded linear map with $U_0=\Gamma(g_0)$.
Then we have $B_0=W_0^+\oplus\Gamma(g_0)$ which concludes the proof.
\end{proof}
\begin{example}
Let $G$ be a smooth field of unitary involutions of $V^R$ along $\Sigma_0\cup \Sigma_1$ which anti-commutes with $A_j$, $j=0,1$.
Then $G$ anti-commutes with all $\sigma_{A_j}(\xi)$, $\xi\in T^*\Sigma_0\cup T^*\Sigma_1$.
%
%
We split $V^R\vert_{\Sigma_j}=E_j^+\oplus E_j^-$ into the sum of $(\pm 1)$-eigenspaces for $G$.

Let $\xi\in T_x\Sigma_j$, $\xi\neq 0$.
W.l.o.g.\ we assume $|\xi|=1$.
Then $(\rmi \sigma_{A_j}(\xi))^2=1$ and $\rmi \sigma_{A_j}(\xi)$ has only the eigenvalues $\pm 1$.
Since $G$ and $\sigma_{A_j}(\xi)$ anti-commute $(E_j^+)_x$, $(E_j^-)_x$, and the $(\pm 1)$-eigenspaces for $\rmi \sigma_{A_j}(\xi)$ all have the same dimension, namely half the dimension of $(V^R)_x$.
The projections onto $E_j^\pm$ are given by $\pi_{(E_j^\pm)_x} = \frac12(1\pm G)$.

Now assume that $\rmi \sigma_{A_j}(\xi)u=-u$ and that $\pi_{(E_j^\pm)_x} u=0$.
Then
$$
0
=\rmi \sigma_{A_j}(\xi)\pi_{(E_j^\pm)_x} u 
= \pi_{(E_j^\mp)_x} \rmi \sigma_{A_j}(\xi) u
=-\pi_{(E_j^\mp)_x} u
$$
and hence
$$
u=\pi_{(E_j^\pm)_x} u + \pi_{(E_j^\mp)_x} u = 0.
$$
Thus both $\pi_{(E_j^+)_x}$ and $\pi_{(E_j^-)_x}$ are injective when restricted to the $(-1)$-eigenspace of $\rmi \sigma_{A_j}(\xi)$.
For dimensional reasons these restrictions form an isomorphism onto $(E_j^\pm)_x$.
Thus Proposition~\ref{prop:graphs} applies to both $E_j=E_j^+$ and $E_j=E_j^-$.
Hence $(L^2(\Sigma_0,E_0^\pm),L^2(\Sigma_1,E_1^\pm))$ are boundary conditions in graph form.
These boundary conditions are known as \emph{chirality conditions}.

The description of the chirality boundary conditions in Definition~\ref{def:ellbc} is as follows:
$V_0^-=L^2_{(-\infty,0)}(A_0)$, $W_0^+=\ker(A_0)\cap L^2(\Sigma_0,E_0^\pm)$, $V_0^+=L^2_{(0,\infty)}(A_0)$, $W_0^-=\ker(A_0)\cap L^2(\Sigma_0,E_0^\mp)$, $g_0=\pm G$ and similarly for $\Sigma_1$.
\end{example}

In order to conclude from Lemma~\ref{prop:graph} that the chirality conditions yield a Fredholm operator one would need to verify condition~\eqref{g0g1kompakt} or \eqref{g0g1klein}.
Condition~\eqref{g0g1kompakt} is not satisfied but for \eqref{g0g1klein} this is not clear.
So let us look at a special case.

\begin{example}\label{exa:statisch}
Let $h$ be a fixed Riemannian metric on the closed spin manifold $\Sigma$ and equip $M=\R\times\Sigma$ with the ``ultrastatic'' metric $-dt^2+h$.
Then \eqref{eq:DiracSurface+} simplifies to
$$
D=-\gamma(\nu)(\nabla_\nu + \rmi A)=-\gamma(\nu)\bigg(-\frac{\partial}{\partial t}+ \rmi A\bigg)
$$
where $A$ is the Dirac operator on $(\Sigma,h)$.
If we now solve the equation $Du=0$ with initial condition $u(0,\cdot)=u_0$ where $u_0$ is an eigenspinor for $A$, $Au_0=\lambda u_0$, then the solution is given by $u(t,x)=e^{\rmi \lambda t}u_0(x)$.
We choose $\Sigma_j=\{j\}\times \Sigma$.
Then $Qu_0=e^{\rmi \lambda}u_0$.

Specializing even further, we put $\Sigma=S^1$ and choose $h$ such that $\Sigma$ has length $1$.
The $1$-dimensional sphere has two spin structures.
For the so-called trivial spin structure the Dirac operator $A$ has the eigenvalues $\lambda=2\pi k$ where $k\in\Z$.
Thus $e^{\rmi \lambda}=1$ and hence $Q=\id$ where we have identified $L^2(\Sigma_0,V^R)=L^2(S^1,SS^1)=L^2(\Sigma_1,V^R)$ and $A_0=A=A_1$.
Now choose 
\begin{align*}
V_0^- &= L^2_{(-\infty,0)}(A),   &V_0^+ = L^2_{(0,\infty)}(A), \quad\quad W_0^- &= \ker(A), \quad &W_0^+&=0,\\
V_1^- &= L^2_{(-\infty,0)}(A),   &V_1^+ = L^2_{(0,\infty)}(A), \quad\quad W_1^- &= 0, \quad &W_1^+&=\ker(A).
\end{align*}
Let $g_0:L^2_{(-\infty,0)}(A) \to L^2_{(0,\infty)}(A)$ be an isomorphism of Hilbert spaces and put $g_1=g_0^{-1}$.
Then 
\[
B_0=\Gamma(g_0) = \{v+g_0v\mid v\in L^2_{(-\infty,0)}(A)\} = \{g_1w+w\mid w \in L^2_{(0,\infty)}(A)\} = \Gamma(g_1) = B_1.
\]
Now $(B_0,Q^{-1}B_1)=(B_0,B_1)=(B_0,B_0)$ is clearly not a Fredholm pair.
This shows that in Proposition~\ref{prop:graph} conditions \eqref{g0g1kompakt} and \eqref{g0g1klein} cannot be dropped.
Without these assumptions boundary conditions in graph form do not give rise to a Fredholm operator in general.
\end{example}

\begin{example}
Finally, we want to point out that one also obtains Dirac-Fredholm pairs $(B_0,B_1)$ if $B_0$ is finite dimensional and $B_1$ has finite codimension or vice versa.
According to Theorem~\ref{thm:DiracFredholm} the Dirac operator with these boundary conditions is Fredholm with index $\dim (B_0) - \mathrm{codim} (B_1)$.
\end{example}


\section*{References}

\begin{biblist}

\bib{AS63}{article}{
    AUTHOR = {Atiyah, Michael F.},
    AUTHOR = {Singer, Isadore M.},
    TITLE = {The index of elliptic operators on compact manifolds},
   JOURNAL = {Bull. Amer. Math. Soc.},
  FJOURNAL = {Bulletin of the American Mathematical Society},
    VOLUME = {69},
      YEAR = {1963},
     PAGES = {422--433},
      ISSN = {0002-9904},
   MRCLASS = {57.50},
  MRNUMBER = {0157392},
MRREVIEWER = {R. Bott},
       URL = {http://dx.doi.org/10.1090/S0002-9904-1963-10957-X},
}

\bib{APS75}{article}{
    AUTHOR = {Atiyah, Michael F.},
    AUTHOR = {Patodi, Vijay K.},
    AUTHOR = {Singer, Isadore M.},
     TITLE = {Spectral asymmetry and {R}iemannian geometry.~{I}},
   JOURNAL = {Math. Proc. Cambridge Philos. Soc.},
  FJOURNAL = {Mathematical Proceedings of the Cambridge Philosophical Society},
    VOLUME = {77},
      YEAR = {1975},
     PAGES = {43--69},
      ISSN = {0305-0041},
   MRCLASS = {58G10 (57E15 57D85)},
  MRNUMBER = {0397797},
       URL = {http://dx.doi.org/10.1017/S0305004100049410},
}

\bib{BB12}{article}{
    AUTHOR = {B\"ar, Christian},
    AUTHOR = {Ballmann, Werner},
     TITLE = {Boundary value problems for elliptic differential operators of first order},
 BOOKTITLE = {in: H.-D.\ Cao and S.-T.\ Yau (eds.), Surveys in differential geometry. {V}ol.~{XVII}, Int. Press, Boston, MA},
    SERIES = {Surv. Differ. Geom.},
    VOLUME = {17},
     PAGES = {1--78},
 PUBLISHER = {Int. Press, Boston, MA},
      YEAR = {2012},
   MRCLASS = {35R01 (35J65 58J05 58J20 58J32)},
  MRNUMBER = {3076058},
MRREVIEWER = {J\~A${}^3$zef Dodziuk},
       URL = {http://dx.doi.org/10.4310/SDG.2012.v17.n1.a1},
}

\bib{BB16}{article}{
    title = {Guide to boundary value problems for Dirac-type operators},
booktitle = {in: W.\ Ballmann et al (eds.), Arbeitstagung Bonn 2013, Prog.\ Math.~319, Birkh\"auser, Cham},
   author = {B\"ar, Christian},
   author = {Ballmann, Werner},
  journal = {arXiv preprint arXiv:1307.3021},
    pages = {43--80},
     year = {2016},
}

\bib{BGM05}{article}{
    AUTHOR = {B\"ar, Christian},
    AUTHOR = {Gauduchon, Paul},
    AUTHOR = {Moroianu, Andrei},
     TITLE = {Generalized cylinders in semi-{R}iemannian and {S}pin geometry},
   JOURNAL = {Math. Z.},
  FJOURNAL = {Mathematische Zeitschrift},
    VOLUME = {249},
      YEAR = {2005},
    NUMBER = {3},
     PAGES = {545--580},
      ISSN = {0025-5874},
   MRCLASS = {53C27 (53C50)},
  MRNUMBER = {2121740},
       URL = {http://dx.doi.org/10.1007/s00209-004-0718-0},
}

\bib{BS15}{article}{
  title={An index theorem for Lorentzian manifolds with compact spacelike Cauchy boundary},
  author={B\"ar, Christian},
  author={Strohmaier, Alexander},
  journal={arXiv:1506.00959},
  year={2015}
}

\bib{BS16}{article}{
    AUTHOR = {B\"ar, Christian},
    AUTHOR = {Strohmaier, Alexander},
     TITLE = {A rigorous geometric derivation of the chiral anomaly in curved backgrounds},
   JOURNAL = {Comm. Math. Phys.},
  FJOURNAL = {Communications in Mathematical Physics},
    VOLUME = {347},
      YEAR = {2016},
    NUMBER = {3},
     PAGES = {703--721},
      ISSN = {0010-3616},
   MRCLASS = {83C60},
  MRNUMBER = {3551253},
MRREVIEWER = {Rold\~A\sterling{}o da Rocha},
       URL = {http://dx.doi.org/10.1007/s00220-016-2664-1},
}

\bib{B81}{book}{
    AUTHOR = {Baum, Helga},
     TITLE = {Spin-{S}trukturen und {D}irac-{O}peratoren \"uber pseudoriemannschen {M}annigfaltigkeiten},
    SERIES = {Teubner-Texte zur Mathematik},
    VOLUME = {41},
 PUBLISHER = {BSB B. G. Teubner Verlagsgesellschaft, Leipzig},
      YEAR = {1981},
     PAGES = {180},
   MRCLASS = {58G30 (53C50 58-02 81D25)},
  MRNUMBER = {701244},
MRREVIEWER = {J\~A${}^1\!/\!_4$rgen Eichhorn},
}

\bib{BEE96}{book}{
    AUTHOR = {Beem, John K.},
    AUTHOR = {Ehrlich, Paul E.},
    AUTHOR = {Easley, Kevin L.},
     TITLE = {Global {L}orentzian geometry},
    SERIES = {Monographs and Textbooks in Pure and Applied Mathematics},
    VOLUME = {202},
   EDITION = {second Ed.},
 PUBLISHER = {Marcel Dekker, Inc., New York},
      YEAR = {1996},
     PAGES = {xiv+635},
      ISBN = {0-8247-9324-2},
   MRCLASS = {53C50 (53-02 83-02)},
  MRNUMBER = {1384756},
MRREVIEWER = {Peter R. Law},
}

\bib{BS06}{article}{
    AUTHOR = {Bernal, Antonio N.},
    AUTHOR = {S\'anchez, Miguel},
     TITLE = {Further results on the smoothability of {C}auchy hypersurfaces
              and {C}auchy time functions},
   JOURNAL = {Lett. Math. Phys.},
  FJOURNAL = {Letters in Mathematical Physics},
    VOLUME = {77},
      YEAR = {2006},
    NUMBER = {2},
     PAGES = {183--197},
      ISSN = {0377-9017},
   MRCLASS = {53C50 (53C80 81T20)},
  MRNUMBER = {2254187},
MRREVIEWER = {Paul E. Ehrlich},
       URL = {http://dx.doi.org/10.1007/s11005-006-0091-5},
}

\bib{BW93}{book}{
    AUTHOR = {Boo\ss-Bavnbek, Bernhelm},
    AUTHOR = {Wojciechowski, Krzysztof P.},
     TITLE = {Elliptic boundary problems for Dirac operators},
    SERIES = {Mathematics: Theory and Applications},
    VOLUME = {},
 PUBLISHER = {Birkh{\"a}user},
      YEAR = {1993},
     PAGES = {},
      ISBN = {0-8176-3681-1},
   MRCLASS = {},
  MRNUMBER = {},
MRREVIEWER = {},
}

\bib{H74}{article}{
    AUTHOR = {Hitchin, Nigel},
     TITLE = {Harmonic spinors},
   JOURNAL = {Advances in Math.},
  FJOURNAL = {Advances in Mathematics},
    VOLUME = {14},
      YEAR = {1974},
     PAGES = {1--55},
      ISSN = {0001-8708},
   MRCLASS = {58G10 (14C30 32J25)},
  MRNUMBER = {0358873},
MRREVIEWER = {F. Hirzebruch},
       URL = {http://dx.doi.org/10.1016/0001-8708(74)90021-8},
}

\bib{K95}{book}{
    AUTHOR = {Kato, Tosio},
     TITLE = {Perturbation theory for linear operators},
    SERIES = {Classics in Mathematics},
      NOTE = {Reprint of the 1980 edition},
 PUBLISHER = {Springer-Verlag, Berlin},
      YEAR = {1995},
     PAGES = {xxii+619},
      ISBN = {3-540-58661-X},
   MRCLASS = {47A55 (46-00 47-00)},
  MRNUMBER = {1335452},
}

\bib{L63}{article}{
    AUTHOR = {Lichnerowicz, Andr\'e},
     TITLE = {Spineurs harmoniques},
   JOURNAL = {C. R. Acad. Sci. Paris},
    VOLUME = {257},
      YEAR = {1963},
     PAGES = {7--9},
   MRCLASS = {53.45 (57.32)},
  MRNUMBER = {0156292},
MRREVIEWER = {R. Hermann},
}

\bib{M16}{article}{
    AUTHOR = {M\"uller, Olaf},
     TITLE = {A note on invariant temporal functions},
   JOURNAL = {Lett. Math. Phys.},
  FJOURNAL = {Letters in Mathematical Physics},
    VOLUME = {106},
      YEAR = {2016},
    NUMBER = {7},
     PAGES = {959--971},
      ISSN = {0377-9017},
   MRCLASS = {53C50 (53C80)},
  MRNUMBER = {3513291},
MRREVIEWER = {Miguel S\~A!`nchez},
       URL = {http://dx.doi.org/10.1007/s11005-016-0850-x},
}

\bib{ON83}{book}{
    AUTHOR = {O'Neill, Barrett},
     TITLE = {Semi-{R}iemannian geometry - With applications to relativity},
    SERIES = {Pure and Applied Mathematics},
    VOLUME = {103},
 PUBLISHER = {Academic Press, Inc., New York},
      YEAR = {1983},
     PAGES = {xiii+468},
      ISBN = {0-12-526740-1},
   MRCLASS = {53-01 (53B30 53C50 83-02)},
  MRNUMBER = {719023},
MRREVIEWER = {N. V. Mitskevich},
}

\end{biblist}

\end{document}